\newcommand{\C}{\mathbb C}
\newcommand{\F}{\mathbb F}
\newcommand{\Q}{\mathbb Q}
\newcommand{\Z}{\mathbb Z}
\newcommand{\fN}{\mathfrak N}
\newcommand{\Ext}{\mathrm {Ext}}
\newcommand{\RNum}[1]{\uppercase\expandafter{\romannumeral #1\relax}}
\newtheorem{theorem}{Theorem}[section]
\newtheorem{corollary}[theorem]{Corollary}
\newtheorem{proposition}[theorem]{Proposition}
\newtheorem{lemma}[theorem]{Lemma}
\theoremstyle{definition}
\theoremstyle{remark}
\newtheorem{remark}[theorem]{Remark}
\newcommand{\R}{\mathbb{R}}
\newcommand{\attop}[1]{{\let\textstyle\scriptstyle\let\scriptstyle\scriptscriptstyle\substack{#1}}}
\renewcommand{\atop}[1]{{\let\scriptstyle\textstyle\let\scriptscriptstyle\scriptstyle\substack{#1}}}
\newcommand{\too}[1]{\overset{#1}\to}
\newcommand{\switchmargin}{
\if@reversemargin
\normalmarginpar
\else
\reversemarginpar
\fi
}
\newcommand{\highlighteva}[1]{\ifmmode{\text{\sethlcolor{Lavender}\hl{$#1$}}}\else{\sethlcolor{Lavender}\hl{#1}}\fi}
\let\c@equation\c@theorem
\numberwithin{equation}{section}
\begin{document}
\title{The reduced ring of the $RO(C_2)$-graded $C_2$-equivariant stable stems}

\author{Eva Belmont}
\address{Department of Mathematics, University of California San Diego, La Jolla, CA 92093, USA}
\email{ebelmont@ucsd.edu}

\author{Zhouli Xu}
\address{Department of Mathematics, University of California San Diego, La Jolla, CA 92093, USA}
\email{xuzhouli@ucsd.edu}

\author{Shangjie Zhang}
\address{Department of Mathematics, University of California San Diego, La Jolla, CA 92093, USA}
\email{shz046@ucsd.edu}

\maketitle

\begin{abstract}
We describe in terms of generators and relations the ring structure of the $RO(C_2)$-graded $C_2$-equivariant stable stems $\pi_\star^{C_2}$ modulo the ideal of all nilpotent elements.
As a consequence, we also record the ring structure of the homotopy groups of the rational $C_2$-equivariant sphere $\pi_\star^{C_2}(\mathbb{S}_\mathbb{Q})$.
\end{abstract}


\section{Introduction}
Nishida's Theorem states that every positive-degree element in the stable homotopy groups of spheres is nilpotent. Consequently, the non-nilpotent elements in the classical stable stems are concentrated in degree 0, and the reduced ring given by the stable stems modulo its nilpotent elements is isomorphic to the integers.

We want to obtain a similar classification of the non-nilpotent elements in the equivariant context. Namely, given a finite group $G$, recall that the $RO(G)$-graded equivariant stable homotopy groups of spheres, which we denote by $\pi_\star^G$, form a graded ring. Let $\fN_G$ denote its nilradical, the ideal generated by the nilpotent elements. We want to understand the structure of the reduced ring 
\[\pi_{\star}^G/\fN_G.\]

In this paper, we give explicit generators and relations for $\pi_\star^{G}/\fN_G$ in
the case $G=C_2$, the cyclic group of order 2; see Theorem \ref{thm:main-ring}.
The main input is the knowledge that the non-nilpotent elements are concentrated in specific computationally accessible degrees, combined with $C_2$-equivariant Adams spectral sequence 
computations.

All of the main pieces of this calculation can be read off of previous work, but the
result itself does not appear in the literature. Our contribution is to put the pieces
together, handling a number of details along the way such as
the determination of part of the zero-stem for arbitrary weight, and
the passage from 2-completed to integral information.

As Nishida's theorem can be thought of as the starting point of chromatic homotopy theory, our work answers an analogous question in $C_2$-equivariant chromatic homotopy theory. Other parts of equivariant chromatic homotopy theory have been studied, giving some expectation about the complexity of the answer.
For example, Balmer and Sanders \cite{BS} proved that $G$-equivariant thick prime ideals are exactly those pulled back from non-equivariant thick prime ideals via the geometric fixed points functors $\Phi^H$ for subgroups $H\leq G$. More to the point, the equivariant nilpotence theorem \cite{BS, BGH} says that $x\in\pi_\star^G$ is non-nilpotent if and only if $\Phi^H(x)$ is non-nilpotent for some subgroup $H$ of $G$. At $G=C_2$, there are only two subgroups to consider, and we have an assembly problem whose pieces can be described simply, but has a fairly complicated answer (see Theorem \ref{thm:main-ring}).

\subsection{Main theorems}
We describe a $\Z$-additive basis for $\pi_\star^{C_2}/\fN_{C_2}$ in Theorem \ref{thm:n(i)}, which emphasizes the 2-divisibility of $\eta^i$ and makes the module structure over the Burnside ring $\pi^{C_2}_{0,0}$ apparent; then we describe the ring structure of $\pi_\star^{C_2}/\fN_{C_2}$ in Theorem \ref{thm:main-ring}, which is our main purpose. We also include a description of the ring structure of the rationalization $\pi_\star^{C_2}(\mathbb{S}_\mathbb{Q})$ (Corollary \ref{cor:rationalization}). 

We use the grading $(s,w)$ to denote $w$ copies of the sign representation $\sigma$ and
$s-w$ copies of the trivial representation. The quantity $s-w$ is called the
\emph{coweight}. 
Let $\rho\in\pi_{-1,-1}^{C_2}$ denote the Euler class
$S^{0,0}\to S^{1,1}$ of the sign representation. The element $\eta\in \pi_{1,1}^{C_2}$ is defined so
that $\Phi^{C_2}(\eta)=2$ and its underlying non-equivariant map is $\Phi^e(\eta) = -\eta_{cl}$, where $\eta_{cl}\in \pi_1^s$ is the classical Hopf invariant one element.
For more on the notation and
choices of generators, see Section \ref{sec:elements}. 

\begin{theorem}\label{thm:n(i)}
The reduced ring of the $C_2$-equivariant stable stems is concentrated in degrees $(s,w)$ where $s=0$ and $w$ is even, or $s-w=0$. Its underlying group structure is a free abelian group of rank 1 in each of those degrees, except for degree $(0,0)$ in which it has rank 2. The additive generators of $\pi_\star^{C_2}/\fN_{C_2}$ are
\begin{align*}
&\rho^i\text{ in degree $(-i,-i)$ for $i\geq 0$}
\\&\omega_n\text{ for $n\in \Z$ in degree $(0,-2n)$}
\\&{\eta^i\over 2^{n(i)}} \text{ in degree  $(i,i)$ for $i\geq 1$}
\end{align*}
where
$$ n(i) = \begin{cases} 
4t-1 & i = 8t
\\4t & i = 8t+j \text{ for } j = 1,2,3,4
\\4t+1 & i = 8t+5
\\4t+2 & i = 8t+6
\\4t+3 & i = 8t+7.
\end{cases} $$
In particular, for $i\geq 1$, $\eta^i$ is uniquely $2^{n(i)}$-divisible in
$\pi_\star^{C_2}/\fN_{C_2}$, and is not $2^{n(i)+1}$-divisible. Let $[C_2/e] \in A(C_2)\cong \Z[C_2]$ denote the nontrivial
generator. Then $[C_2/e]= \omega_0$ acts by multiplication by 2 on $\omega_n$
for all $n$,
and acts as zero on ${\eta^i\over 2^{n(i)}}$ and $\rho^i$ for $i\geq 1$.
\end{theorem}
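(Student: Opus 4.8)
The plan is to assemble the theorem from two kinds of input: a structural reduction of where non-nilpotent elements can live, supplied by the equivariant nilpotence theorem, and explicit Adams spectral sequence data giving the divisibility of the powers of $\eta$. First I would use the fact, noted in the introduction, that $x\in\pi_\star^{C_2}$ is non-nilpotent if and only if $\Phi^e(x)$ or $\Phi^{C_2}(x)$ is non-nilpotent. Applying Nishida's theorem to both the underlying classical stems (detected by $\Phi^e$) and the geometric fixed point stems (detected by $\Phi^{C_2}$, which land in the classical stems as well), I would argue that a non-nilpotent class must have nonzero image under one of these functors in a degree where the relevant classical group is non-nilpotent. Since the classical reduced ring is concentrated in degree $0$, this forces $\Phi^e(x)$ to live in classical degree $0$ and $\Phi^{C_2}(x)$ likewise, and translating the bidegree bookkeeping for the two functors pins the surviving classes to the two lines $s=0$ and $s-w=0$ (coweight zero). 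This is what isolates the three families $\rho^i$, $\omega_n$, and the $\eta$-powers as the only possible sources of non-nilpotent classes.

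Next I would identify the additive generators on each line. On the coweight-zero line $s=w$, the relevant groups are the $C_2$-equivariant analogues of the classical stems detected through the Hopf element $\eta$, and I would show each such group has a single non-nilpotent generator given by the appropriate power $\eta^i$, up to the $2$-divisibility correction; on the line $s=0$ with $w$ even, I would identify $\omega_n$ in degree $(0,-2n)$, which requires the promised determination of the zero-stem $\pi_{0,w}^{C_2}$ for arbitrary weight, and on the negative $\rho$-line the generators $\rho^i$ come directly from powers of the Euler class. The Burnside ring action claims then follow by computing $\omega_0=[C_2/e]$ against each generator: on $\omega_n$ it multiplies by $2$ because the transfer-restriction composite is multiplication by $2$ on the relevant fixed-point data, while on $\eta^i/2^{n(i)}$ and on $\rho^i$ (for $i\geq 1$) it acts as zero, which I would verify from the fact that $[C_2/e]$ restricts trivially to geometric fixed points and these classes are detected there, or are $\rho$-torsion-free in a way that kills the transfer.

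The technical heart, and the main obstacle, is determining the exact divisibility exponent $n(i)$ of $\eta^i$ in $\pi_\star^{C_2}/\fN_{C_2}$, giving the $8$-periodic formula. Here I would read off the $C_2$-equivariant Adams spectral sequence computations: the key is that $\eta$ is $2$-divisible in a pattern governed by the image-of-$J$ / $ko$-style $8$-fold periodicity (the $4t$ growth with bumps at $j=5,6,7$ mirrors the classical $2$-adic valuation of the image of $J$), so I would locate $\eta^i$ in the Adams $E_\infty$-page, track the hidden $2$-extensions and the maximal power of $2$ dividing it, and argue no further division survives modulo nilpotents. The subtle point, flagged in the introduction, is the passage from $2$-complete information—where the Adams spectral sequence lives—to integral statements about $\pi_\star^{C_2}$ itself; I would resolve this by checking that away from the prime $2$ these groups contribute nothing non-nilpotent on the $\eta$-line (since $\eta$ is $2$-torsion classically and the odd-primary equivariant contributions vanish on this line), so the $2$-complete divisibility computation is the integral answer. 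Verifying sharpness—that $\eta^i$ is genuinely not $2^{n(i)+1}$-divisible—is where the most careful bookkeeping is needed, since it amounts to showing a specific class on the $E_\infty$-page is not a further multiple of $2$, with no hidden extension supplying extra divisibility.
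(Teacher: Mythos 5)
Your overall architecture (restrict where non-nilpotent classes can live, exhibit generators on the two lines, pin down the $2$-divisibility of $\eta^i$, then compute the $[C_2/e]$-action) matches the paper's, and your treatment of the Burnside action --- killing $[C_2/e]\cdot\eta^i/2^{n(i)}$ and $[C_2/e]\cdot\rho^i$ by noting that both geometric fixed points and underlying maps of the product are nilpotent --- is a valid alternative to the paper's use of the motivic relations $2\eta=\rho\eta^2$ and $2\rho=\rho^2\eta$ (Lemma \ref{lem:2eta}). But there are two genuine gaps. First, the concentration statement does not follow from the nilpotence theorem (Theorem \ref{thm: BS, BGH}) plus ``bidegree bookkeeping'' alone: that argument only shows a non-nilpotent class must satisfy $s=0$ \emph{or} $s-w=0$, and it neither excludes odd weights on the $s=0$ line nor bounds the rank in any degree by $1$. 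The paper instead gets both from the computation of the free ranks of $\pi_{s,w}^{C_2}$ (Theorem \ref{thm: rank of stable stems}, due to Lin, Greenlees--May, Behrens--Shah, Greenlees--Quigley) combined with Iriye's theorem that torsion elements are nilpotent and Corollary \ref{cor:torsion-free}. Without that external input your reduction is incomplete.

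Second, and more seriously, the formula for $n(i)$ --- the technical heart, as you say --- is asserted rather than proved. You propose to locate $\eta^i$ on the $C_2$-Adams $E_\infty$-page and chase hidden $2$-extensions, but you never identify a source for that computation, and sharpness (non-$2^{n(i)+1}$-divisibility) would require ruling out hidden extensions in every relevant degree. The paper's actual argument (Proposition \ref{prop:2div}) is entirely different and much shorter: Landweber computed that the image of $\Phi^{C_2}\colon\pi_{i,i}^{C_2}\to\pi_0^s\cong\Z$ is exactly $2^{b(i)}\Z$ with $b(i)=i-n(i)$; since $\Phi^{C_2}(\eta)=2$, this immediately gives both the $2^{n(i)}$-divisibility of $\eta^i$ modulo nilpotents and its non-$2^{n(i)+1}$-divisibility, with uniqueness from torsion-freeness of the reduced ring. (Equivalently one can use the Mahowald invariants $M(2^i)$.) Relatedly, your odd-primary check for the $2$-complete-to-integral passage only addresses the $\eta$-line; for the $\omega_n$ the paper must also verify (Lemma \ref{lem:non-p-divisible}, using Behrens--Shah's odd-primary computation $(\pi_{0,-2n}^{C_2})^\wedge_p\cong\Z_p\{\tau^{2n}\}$ with unit underlying map) that the integral generator has underlying degree exactly $2$, which your sketch omits.
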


\begin{remark}
In the $C_2$-Adams spectral sequence, the elements $\omega_n$ are represented by $\tau^{2n}h_0$ when $n\geq 0$ and
${\gamma\over \tau^{2|n|-1}}$ when $n < 0$.
The Adams spectral sequence representatives for ${\eta^i\over 2^{n(i)}}$ can be
read off of Table \ref{tab:Adams-names}.
Also, note $\eta$ is the element in degree $(1,1)$, as $n(1)=0$.
\end{remark}

\begin{theorem}\label{thm:main-ring}
As a ring, $\pi_\star^{C_2}/\fN_{C_2}$ is generated by
\begin{align*}
 & \rho \text{ in degree $(-1,-1)$}
\\ & \omega_n \text{ for } n \in \Z \text{ in degree $(0, -2n)$}
\\ & {\eta^i\over 2^{n(i)}} \text{ for } i \geq 1, i\equiv 1\text{ or } 7\pmod 8 \text{ in degree $(i,i)$}
\end{align*}
subject to the following additional relations.
\begin{enumerate}
	\item $2 = \omega_0 + \rho\eta$
    \item $\omega_n\cdot \omega_m = 2\cdot \omega_{n+m}$ for $n, m \in \Z$.
    \item 
    If $i\equiv 1\pmod 8$, then
	$$ \eta^3 \cdot {\eta^i\over 2^{n(i)}} = \rho^3 \cdot {\eta^{i+6}\over 2^{m(i+6)}}. $$
	If $i\equiv 7\pmod 8$, then
	$$ \eta\cdot {\eta^i\over 2^{n(i)}} = \rho \cdot {\eta^{i+2}\over 2^{n(i+2)}}. $$
    \item $\displaystyle 0 = \rho \cdot \omega_n = {\eta^i\over 2^{n(i)}}\cdot \omega_n$ for all $n\in \Z$, $i\geq 1$.
\end{enumerate}
\end{theorem}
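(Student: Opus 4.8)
The plan is to present $\pi_\star^{C_2}/\fN_{C_2}$ as a quotient of the abstract graded ring $R$ generated by the listed symbols subject only to relations (1)--(4), and to prove that the resulting comparison map is an isomorphism. Writing $g_i := \eta^i/2^{n(i)}$ for brevity, there is a graded ring homomorphism $\Phi\colon R\to \pi_\star^{C_2}/\fN_{C_2}$ as soon as relations (1)--(4) are known to hold in the target. Theorem \ref{thm:n(i)} tells us that $\pi_\star^{C_2}/\fN_{C_2}$ is a free abelian group of rank $1$ in each degree with $s=w$ or with $s=0$ and $w$ even, of rank $2$ in the overlap degree $(0,0)$, and of rank $0$ in all other degrees. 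It therefore suffices to show that $\Phi$ is surjective and that in each degree the rank of $R$ is at most the rank of the target; surjectivity together with matching ranks of finitely generated free abelian groups then forces $\Phi$ to be a degreewise isomorphism.

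First I would verify relations (1)--(4). Relation (2) and the identity $\rho\cdot\omega_n=0$ from (4) are the Burnside-ring relations $[C_2/e]^2=2[C_2/e]$ and $\rho\cdot[C_2/e]=0$ once one records $\omega_0=[C_2/e]$; the remaining cases follow by multiplying with the appropriate $\omega_m$ and using that $[C_2/e]$ annihilates $\eta$. Relation (1) is the splitting of $2\in\pi_{0,0}^{C_2}$ from Section \ref{sec:elements}. Relation (3) is the one genuinely computational input, read off from the $C_2$-equivariant Adams spectral sequence via the names in Table \ref{tab:Adams-names}; it encodes the precise meshing of the $\eta$-tower and the $\rho$-tower along the coweight-$0$ line.

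Next I would establish surjectivity. The elements $\rho^i$, $\omega_n$, and the generators $g_i$ with $i\equiv 1,7\pmod 8$ lie in the image by fiat, so only the remaining $g_i$ need to be reached. From relation (1), rewritten as $\rho\eta=2-\omega_0$, together with the vanishing in (4), one derives the two ladder identities
$$ \rho\cdot g_i \in \{\, g_{i-1},\ 2g_{i-1}\,\}, \qquad \eta\cdot g_i \in \{\, g_{i+1},\ 2g_{i+1}\,\}, $$
where the coefficient is $1$ precisely when the value of $n$ changes across that step. Tracking the increments of $n(i)$ over one period of length $8$ shows that descending from $g_{8t+7}$ by $\rho$ produces $g_{8t+6},g_{8t+5},g_{8t+4}$, that ascending from $g_{8t+1}$ by $\eta$ produces $g_{8t+2},g_{8t+3},g_{8t+4}$, and that $g_{8t}=\rho\cdot g_{8t+1}$; hence every $g_i$ with $i\geq 1$ lies in the subring generated by the stated generators.

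Finally, for the rank bound I would reduce monomials to a normal form. By (4) any monomial containing an $\omega_n$ alongside a $\rho$ or a $g_j$ vanishes, so a nonzero monomial is either a pure product of $\omega$'s, collapsed by (2) to $2^{k-1}\omega_{\sum n_j}$, or a coweight-$0$ product of $\rho$'s and $g_j$'s. For the latter the ladder identities reduce any product of total degree $(s,s)$ to an integer multiple of $g_s$ if $s\geq 1$, of $\rho^{-s}$ if $s\leq -1$, and to a $\Z$-combination of $1$ and $\omega_0$ if $s=0$ (using $\rho\eta=2-\omega_0$). Thus each graded piece of $R$ is cyclic of rank at most $1$, except degree $(0,0)$ of rank at most $2$, which is the desired bound. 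The main obstacle is the completeness half: one must check that the products $g_a\cdot g_b$ of two positive generators reduce \emph{consistently} to the predicted multiple of $g_{a+b}$. This is exactly where relation (3) is indispensable, since it reconciles the two routes---ascending by $\eta$ and descending by $\rho$---to each non-generator $g_i$, the meeting points being the residues $0$ and $4$ modulo $8$; its correctness rests on the superadditivity $n(a)+n(b)\leq n(a+b)$ of the divisibility function together with the Adams spectral sequence computation.
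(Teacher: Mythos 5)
Your overall strategy is sound and, at the level of inputs, the same as the paper's: the additive structure from Theorem \ref{thm:n(i)}, the divisibility function $n(i)$ and its unit increments, and the Adams-spectral-sequence/Landweber computation that pins down relation (3). Where you differ is in organization: the paper works in coweight zero with the $\rho$-divisibility function $m(i)$ of Proposition \ref{prop:nn} and translates back and forth via Lemma \ref{lem:2-to-eta}, whereas you work directly with $2$-divisibility and the ladder identities $\rho\, g_i = 2^{1-(n(i)-n(i-1))}g_{i-1}$, $\eta\, g_i = 2^{n(i+1)-n(i)}g_{i+1}$ (which indeed follow from $\rho\eta=2-\omega_0$, $\omega_0\eta=0$, and torsion-freeness). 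You are also more explicit than the paper about the completeness half of the presentation claim, via the surjectivity-plus-rank-count scheme; that is a genuine improvement in framing, since the paper's proof only checks that the generators generate and that the relations hold.

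Two concrete problems remain. First, your justification of relation (2) for general $n,m$ is circular: deducing $\omega_n\cdot\omega_m=2\,\omega_{n+m}$ from $\omega_0^2=2\omega_0$ ``by multiplying with the appropriate $\omega_m$'' presupposes the relation you are proving. The correct argument (Theorem \ref{thm:main-ring (a) (b)}) is that degree reasons force $\omega_n\cdot\omega_m\in\Z\{\omega_{n+m}\}$, and the coefficient is then read off from $\Phi^e(\omega_n)=\Phi^e(\omega_m)=2$ together with the normalization $\Phi^e(\omega_{n+m})=2$. Second, the decisive step of your rank bound --- that every monomial $\rho^a\prod_j g_{i_j}$ of degree $(s,s)$ in the abstract ring $R$ reduces, using only (1)--(4), to an integer multiple of one fixed element --- is asserted but not carried out; the ladder identities you invoke are identities in the target involving all $g_i$, not consequences of the presentation of $R$, so the reduction in $R$ must be run through relations (1) and (3) alone (e.g.\ $\rho^5 g_7=\rho^2 g_1^4=(2-\omega_0)^2 g_1^2=4g_1^2$), and the consistency of the two routes to each non-generator is exactly what needs checking. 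Finally, a small sign-of-direction slip: your uniform statement that the ladder coefficient is $1$ ``precisely when $n$ changes'' is right for the $\rho$-direction but backwards for the $\eta$-direction (there the coefficient is $1$ exactly when $n(i+1)=n(i)$); your specific instances are nevertheless computed correctly.
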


The next statement is obtained by rationalizing Theorem \ref{thm:main-ring}. The
additive structure of this ring is previously known; see for example \cite{GQ}.

\begin{corollary}\label{cor:rationalization}
Let $\mathbb{S}_{\mathbb{Q}}$ denote the $C_2$-equivariant rational sphere.
As a ring, we have
$$ \pi_\star^{C_2}(\mathbb{S}_\mathbb{Q}) \cong \pi_\star^{C_2}\otimes \Q \cong \Q[\rho, \eta, \omega_1, \omega_{-1}]\big/ \sim, $$ where $$\ |\rho|=(-1, -1), |\eta|=(1, 1), |\omega_1|=(0, -2), |\omega_{-1}|=(0, 2) $$
and $\sim$ denotes the relations
\begin{align*}
\omega_1\cdot \omega_{-1}  & = 4-2\rho\eta 
\\\rho\cdot \omega_{\pm 1}  &= \eta\cdot \omega_{\pm 1} = 0
\\ \rho\cdot (2-\rho\eta) & =\eta\cdot (2-\rho\eta)=0.
\end{align*}
\end{corollary}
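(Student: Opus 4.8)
The plan is to obtain Corollary \ref{cor:rationalization} as a direct consequence of Theorem \ref{thm:main-ring} by applying the exact functor $-\otimes_{\Z}\Q$ to the presentation of $\pi_\star^{C_2}/\fN_{C_2}$. First I would observe that rationalizing kills nilpotents automatically, so that $\pi_\star^{C_2}(\mathbb{S}_\mathbb{Q})\cong \pi_\star^{C_2}\otimes\Q \cong (\pi_\star^{C_2}/\fN_{C_2})\otimes\Q$; the first isomorphism is standard (rationalization of the sphere computes the rational homotopy), and the second holds because every element of $\fN_{C_2}$ becomes a nilpotent, hence zero, element in a rational (thus reduced) graded-commutative ring. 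This reduces the problem to computing $(\pi_\star^{C_2}/\fN_{C_2})\otimes\Q$ from the generators-and-relations description in Theorem \ref{thm:main-ring}.

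Next I would track what happens to each family of generators after inverting the primes other than nothing, i.e. tensoring with $\Q$. The elements $\eta^i/2^{n(i)}$ for $i\geq 1$ become rational multiples of $\eta^i$, so once $\eta$ is inverted-by-scalars they are all absorbed into powers of $\eta$; in particular the infinitely many generators in positive coweight-zero degree collapse to the single generator $\eta$. Similarly, the additive generators $\omega_n$ for $|n|\geq 2$ are expressible via the multiplicative relation $\omega_n\cdot\omega_m=2\omega_{n+m}$, so rationally every $\omega_n$ is a $\Q$-multiple of a product of $\omega_1$ and $\omega_{-1}$; hence $\omega_1$ and $\omega_{-1}$ (together with $\rho$ and $\eta$) generate the whole ring. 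This explains why the four generators $\rho,\eta,\omega_1,\omega_{-1}$ in the stated degrees suffice.

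The key step is then to translate the relations (1)--(4) into the three displayed rational relations. Relation $\omega_1\cdot\omega_{-1}=4-2\rho\eta$ comes from combining relation (2) at $(n,m)=(1,-1)$, which gives $\omega_1\cdot\omega_{-1}=2\omega_0$, with relation (1), $\omega_0=2-\rho\eta$, yielding $2\omega_0=4-2\rho\eta$. The relations $\rho\cdot\omega_{\pm1}=\eta\cdot\omega_{\pm1}=0$ are the rationalizations of relation (4) (noting $\eta/2^{n(1)}=\eta$ since $n(1)=0$). The final line $\rho\cdot(2-\rho\eta)=\eta\cdot(2-\rho\eta)=0$ I would derive by substituting $2-\rho\eta=\omega_0=\tfrac12\,\omega_1\omega_{-1}$ and applying $\rho\cdot\omega_{\pm1}=0=\eta\cdot\omega_{\pm1}$; equivalently it records that $\rho\cdot\omega_0=0=\eta\cdot\omega_0$, which is relation (4) at $n=0$. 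Finally I would check that relation (3), which only couples the positive-coweight generators among themselves, becomes vacuous or automatically satisfied once all the $\eta^i/2^{n(i)}$ are identified with $\Q$-multiples of $\eta^i$, since both sides then live in the same one-dimensional rational vector space and the relation just fixes the scalar.

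The main obstacle I expect is bookkeeping rather than conceptual: I must verify that the rationalized presentation has no \emph{further} relations beyond the three displayed ones, i.e. that the evident surjection from $\Q[\rho,\eta,\omega_1,\omega_{-1}]/\!\sim$ onto $(\pi_\star^{C_2}/\fN_{C_2})\otimes\Q$ is an isomorphism. For this I would compare $\Q$-dimensions degree by degree: the target has rank $1$ in each degree $(0,w)$ with $w$ even, each degree $(s,s)$, and rank $2$ in degree $(0,0)$, by Theorem \ref{thm:n(i)}, and I would check the quotient ring on the left realizes exactly these dimensions. The potential subtlety is confirming that relation (3) does not impose an unexpected rational constraint collapsing a degree to zero, and that the coweight-zero subring generated by $\rho$ and $\eta$ together with the annihilation relations $\rho\omega_{\pm1}=\eta\omega_{\pm1}=0$ and $\rho(2-\rho\eta)=\eta(2-\rho\eta)=0$ produces precisely a rank-one group in each relevant degree and does not accidentally force $\rho$ or $\eta$ itself to be torsion or nilpotent rationally.
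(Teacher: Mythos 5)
Your proposal follows essentially the same route as the paper: rationalize the presentation of Theorem \ref{thm:main-ring} and then compare ranks degreewise (the paper's proof is exactly ``the rationalized reduced ring sits inside $\pi_\star^{C_2}(\mathbb{S}_\Q)$; compare ranks with Theorem \ref{thm: rank of stable stems}''), and your translation of the generators and relations, including the observation that relation (3) becomes automatic rationally, is correct. One justification you give is wrong, though the step it supports is true: you claim $(\pi_\star^{C_2}/\fN_{C_2})\otimes\Q\cong\pi_\star^{C_2}\otimes\Q$ because ``a rational graded-commutative ring is reduced,'' but rational rings can certainly have nilpotents (e.g.\ $\Q[x]/(x^2)$), so $\fN_{C_2}\otimes\Q=0$ is not formal. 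The correct argument is the one the paper leans on: the quotient map induces a surjection $\pi_\star^{C_2}\otimes\Q\to(\pi_\star^{C_2}/\fN_{C_2})\otimes\Q$, and the source and target have the same finite $\Q$-dimension in every degree by Theorem \ref{thm: rank of stable stems} and Theorem \ref{thm:n(i)} respectively (equivalently, $\fN_{C_2}$ coincides with the torsion in each degree once one knows the reduced ring achieves the rank upper bound), so the surjection is an isomorphism. With that repair, which uses only the rank comparison you already invoke at the end, your argument goes through.
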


\begin{proof}
By Corollary \ref{cor:torsion-free}, the rationalization of $\pi^{C_2}_\star/\fN_{C_2}$ in Theorem \ref{thm:main-ring} is a subring of $\pi_\star^{C_2}(S_\Q)$. Comparing the rank in each degree with Theorem \ref{thm: rank of stable stems}, we see that the inclusion is in fact an equality.
\end{proof}



For the rest of this paper, we will implicitly work over the reduced ring to avoid problems arising from nilpotent elements in the same degree. In particular, one should think of a non-nilpotent element as a coset.

\subsection{Notation.}$\ $\\
$\pi_*^s$: the classical stable homotopy groups of spheres.\\
$\pi_\star^G$: the $RO(G)$-graded stable homotopy groups of spheres.\\
$\pi_{s, w}^{C_2}$: the $RO(C_2)$-graded stable homotopy groups of spheres with $RO(C_2)$-degree $(s-w)+w\sigma$. \\
$\fN_G$: the nilradical in the $RO(G)$-graded stable homotopy groups of spheres. \\
$\fN$: the nilradical $\fN_{C_2}$.\\
$\pi_{s, w}^{C_2}/\fN$: the reduced ring of $RO(C_2)$-graded stable homotopy groups of spheres in degree $(s, w)$.\\
$\pi_{s,w}^\R$: the $\R$-motivic homotopy groups in stem $s$ and motivic weight $w$.\\
$\Ext_{\C}^{s,f,w}$: the $E_2$-page of the $\C$-motivic Adams spectral sequence
in stem $s$, Adams filtration $f$, and motivic weight $w$.\\
$\Ext_{C_2}^{s,f,w}$: the $E_2$-page of the $C_2$-equivariant Adams spectral
sequence in degree $(s,f,w)$.\\
$\Ext_{\R}^{s,f,w}$: the $E_2$-page of the $\R$-motivic Adams spectral sequence
in degree $(s,f,w)$.\\
$\Ext_{NC}^{s,f,w}$: the summand of $\Ext_{C_2}^{s,f,w}$ from \cite[\S2,3]{GHIR}. \\
$E_r^+$: the $\rho$-Bockstein spectral sequence which converges to $\Ext_\R$.\\
$E_r^-$: the $\rho$-Bockstein spectral sequence which converges to $\Ext_{NC}$.\\
$\Phi^{C_2}$: the $C_2$-geometric fixed point functor as in \cite{BS}.\\
$\Phi^e$: the forgetful functor taking a $C_2$-equivariant spectrum to its underlying non-equivariant spectrum.

\subsection{Organization} 
In Section \ref{sec:preliminaries} we review general results about
equivariant nilpotence and define several important elements.
In Section \ref{sec:stem0}, we review the $\R$-motivic and $C_2$-equivariant
Adams spectral sequence and use them to describe $\pi_\star^{C_2}/\fN$ in stem
zero. In Section \ref{sec:cw0}, we describe $\pi_\star^{C_2}/\fN$ in coweight
zero. In Section \ref{sec:proofs}, we assemble previous results to prove the main theorems. In Section \ref{a few rmk} we discuss the $p$-completed ring structure and higher multiplicative structure in the reduced ring. 

\subsection{Acknowledgements}
The first author was supported by NSF Grant DMS-2204357.
The second author was supported by NSF Grant DMS-2105462.

\section{Notation and preliminaries} \label{sec:preliminaries}

\subsection{Prior results on equivariant non-nilpotence}
Next we recall some theorems about $C_2$-equivariant non-nilpotent elements that form the starting point for our work. Iriye first proved an analogue of the Nishida nilpotence theorem in the equivariant context.
\begin{theorem}[\cite{I}] \label{thm:torsion-nilpotent}
Let $G$ be a finite group.
Every torsion element of $\pi_\star^G$ is nilpotent.
\end{theorem}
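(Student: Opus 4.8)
The plan is to leverage the equivariant nilpotence theorem recalled in the introduction, which detects non-nilpotence in $\pi_\star^G$ via geometric fixed points. Taking its contrapositive, a class $x\in\pi_\star^G$ is nilpotent if and only if $\Phi^H(x)$ is nilpotent for every subgroup $H\leq G$, where $\Phi^{\{e\}}=\Phi^e$ is the underlying non-equivariant functor. Since $\Phi^H(\mathbb{S}^G)\simeq\mathbb{S}$, each $\Phi^H$ carries a class in $\pi_V^G$ to a class in the classical stable stem $\pi_{\dim V^H}^s$. So it suffices to show that the image of a torsion class under each $\Phi^H$ is nilpotent in $\pi_*^s$, and then assemble.

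First I would record that each $\Phi^H$ is symmetric monoidal, hence induces ring homomorphisms on homotopy; in particular it is additive. Therefore if $nx=0$ for some positive integer $n$, then $n\,\Phi^H(x)=\Phi^H(nx)=0$, so that $\Phi^H(x)$ is a torsion element of $\pi_*^s$. Next I would invoke the classical Nishida nilpotence theorem. The only torsion-free part of $\pi_*^s$ is $\pi_0^s\cong\mathbb{Z}$; hence any nonzero torsion class lies in a positive stem, where Nishida's theorem forces nilpotence, while a torsion class in $\pi_0^s=\mathbb{Z}$ must vanish and is trivially nilpotent. Thus $\Phi^H(x)$ is nilpotent for each $H$, and the equivariant nilpotence theorem concludes that $x$ is nilpotent.

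The main conceptual content is the equivariant nilpotence theorem itself, which I treat as a black box; granting it, the remaining steps are formal. The only point requiring genuine care is bookkeeping: verifying that $\Phi^H$ sends a torsion $RO(G)$-graded class to an honest integer-graded classical stem (via $V\mapsto V^H$), so that Nishida's theorem applies verbatim, and that additivity of $\Phi^H$ is what transports torsion. If one instead wanted an argument independent of the full Balmer--Sanders and Barthel--Greenlees--Hausmann machinery---closer in spirit to Iriye's original proof---the difficulty would migrate to establishing the relevant special case of geometric-fixed-point detection directly, for instance via the isotropy separation sequence together with a cellular filtration of $\mathbb{S}^G$, which is considerably more laborious.
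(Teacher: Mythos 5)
Your argument is sound, but be aware that the paper offers no proof of this statement at all: it is quoted verbatim from Iriye \cite{I}, so the comparison is really with Iriye's original argument. That argument is a direct equivariant generalization of Nishida's extended-power method and dates from long before the tensor-triangulated machinery you invoke. Your route instead deduces the result from the equivariant nilpotence theorem of \cite{BS} and \cite{BGH} (which the paper records only for $G=C_2$ as Theorem \ref{thm: BS, BGH}, though the general finite-group version is in those references): a torsion class has torsion geometric fixed points in each $\pi_{\dim V^H}^s$, these vanish in stem $\leq 0$ and are nilpotent in positive stems by Nishida, and geometric-fixed-point detection finishes the job. This is logically legitimate --- neither \cite{BS} nor \cite{BGH} relies on Iriye's theorem, so there is no circularity, and within this paper's toolkit (where Theorem \ref{thm: BS, BGH} is already assumed) it is an economical shortcut. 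The two points deserving explicit care in your write-up are (i) that the nilpotence detection of \cite{BS,BGH} is stated for $\otimes$-nilpotence of maps between compact objects, so one should note that for $x\colon S^V\to S^0$ the smash powers $x^{\otimes n}$ agree, up to the unit coming from the symmetry isomorphism, with the powers $x^n$ in the $RO(G)$-graded ring, so the two notions of nilpotence coincide; and (ii) that you are using the theorem in a generality (all finite $G$, all $RO(G)$-gradings) beyond what the paper states. What your approach buys is brevity at the cost of a vastly deeper black box resting on Devinatz--Hopkins--Smith; what Iriye's approach buys is a self-contained, chromatic-free proof available since the 1980s, which is presumably why the authors cite it rather than rederiving the statement.
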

From now on, we specialize to $G=C_2$ and write $\fN := \fN_{C_2}$.

\begin{theorem}[\cite{Lin}, \cite{GM}, \cite{behrens-shah}, \cite{GQ}]\label{thm: rank of stable stems}
$\pi_{s, w}^{C_2}$ has infinite order if and only if
    \[s=w\quad \ or \quad s=0\ \mathrm{and}\ w\ \mathrm{is}\ \mathrm{even}\]
Moreover, in these cases the rank of $\pi_{s, w}^{C_2}$ as a free abelian group is 1 unless $(s,w)=(0,0)$, in which case the rank is 2.
\end{theorem}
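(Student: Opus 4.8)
The plan is to reduce the statement to a rational computation and then split the rational homotopy via isotropy separation. Each group $\pi_{s,w}^{C_2}$ is a finitely generated abelian group, so having infinite order is equivalent to having positive rank, and that rank equals $\dim_\Q\big(\pi_{s,w}^{C_2}\otimes\Q\big)$; by Theorem~\ref{thm:torsion-nilpotent} the finite (torsion) part is precisely the nilpotent part, so passing to ranks loses nothing essential for this statement. It therefore suffices to compute $\pi_\star^{C_2}\otimes\Q$ degreewise. For this I would use the isotropy separation cofiber sequence of $C_2$-spectra
\[ EC_{2+}\wedge \mathbb{S} \to \mathbb{S} \to \widetilde{EC_2}\wedge \mathbb{S}, \]
and observe that after rationalization the resulting long exact sequence on $\pi_{s,w}^{C_2}$ splits, since the rational $C_2$-equivariant sphere splits as a product indexed by the two conjugacy classes of subgroups $e$ and $C_2$. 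Hence in each degree $\pi_{s,w}^{C_2}\otimes\Q$ is the direct sum of the contributions of the two outer terms, and it remains to compute each one.

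For the \emph{geometric} term, the functor $\Phi^{C_2}$ identifies $\pi_{s,w}^{C_2}(\widetilde{EC_2}\wedge\mathbb{S})$ with the $\rho$-inverted homotopy of $\Phi^{C_2}\mathbb{S}\simeq \mathbb{S}$. Concretely, $\Phi^{C_2}$ sends the representation sphere $S^{(s-w)+w\sigma}$ to $S^{s-w}$, so this term computes $\pi_{s-w}^s$ made periodic by inverting $\rho$. Since $\pi_*^s\otimes\Q$ is $\Q$ in degree $0$ and vanishes otherwise, the geometric contribution is $\Q$ exactly when the coweight $s-w$ vanishes, i.e.\ along the line $s=w$, and is zero elsewhere. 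This is the part that accounts for the classes $\rho^i$ and $\eta^i/2^{n(i)}$ of Theorem~\ref{thm:n(i)}.

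For the \emph{free} term, the Adams isomorphism identifies $\pi_{s,w}^{C_2}(EC_{2+}\wedge\mathbb{S})$ with a homotopy-orbit group of the underlying spectrum. The decisive point is the twisting: the $C_2$-action on the underlying sphere $S^s$ of $S^{(s-w)+w\sigma}$ acts by $(-1)^w$ on top homology, since it is a product of $w$ reflections, so the relevant coefficients are $\pi_*^s$ twisted by the $w$-th power of the sign character. The homotopy-orbit spectral sequence then reduces rationally to $H_*(C_2;\Q^{\otimes w})$, which is $\Q$ in degree $0$ when $w$ is even and vanishes entirely when $w$ is odd. Tracking the grading, the free contribution is $\Q$ precisely in degrees $(0,w)$ with $w$ even, accounting for the classes $\omega_n$. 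Assembling the two computations finishes the argument: the geometric locus $\{s=w\}$ and the free locus $\{s=0,\ w\text{ even}\}$ are disjoint except at the origin $(0,0)$, which lies on both; on their union each relevant contribution has rank $1$, while at $(0,0)$ the two contributions add to rank $2$. This yields exactly the stated ranks, and infinite order precisely in those degrees.

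The main obstacle is the bookkeeping for the free term. One must carry the full $RO(C_2)$-grading through the Adams isomorphism and correctly identify the sign twist by $(-1)^w$, because it is this twist—rather than anything about the stem $s$—that enforces the parity condition ``$w$ even'' and thereby singles out which weights support a free summand. Ensuring the rational splitting is genuine (not merely a filtration) and verifying that no rank is hidden in the connecting maps of the isotropy separation sequence are the remaining points requiring care.
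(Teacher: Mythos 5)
Your argument is correct, but note that the paper does not prove this statement at all: it is quoted as a known result with citations to Lin, Greenlees--May, Behrens--Shah, and Greenlees--Quigley, so there is no internal proof to compare against. What you have written is essentially the standard rational argument that appears in the cited literature (notably Greenlees--Quigley): finite generation of each $\pi_{s,w}^{C_2}$ reduces ``infinite order'' to ``positive rational dimension''; the rational $C_2$-sphere splits via the idempotents of $A(C_2)\otimes\Q$ into the pieces carried by $EC_{2+}\wedge\mathbb{S}$ and $\widetilde{EC_2}\wedge\mathbb{S}$, so the isotropy separation sequence genuinely splits and no rank hides in connecting maps; the geometric piece contributes $\pi_{s-w}^s\otimes\Q$, nonzero exactly on the line $s=w$; and the free piece, via the Adams isomorphism and the degree-$(-1)^w$ action on the underlying sphere of $S^{w\sigma}$, contributes $H_0\bigl(C_2;\pi_{-s}(S^{-s})\otimes\Q(w)\bigr)$, nonzero exactly when $s=0$ and $w$ is even. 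The two loci meet only at the origin, giving rank $2$ there and rank $1$ elsewhere on the union. The two points genuinely requiring care are the ones you flagged: the sign twist on the free summand (which is what produces the parity condition on $w$) and the fact that the rational splitting is an actual product decomposition rather than a filtration. Your aside that the torsion subgroup ``is precisely the nilpotent part'' is not needed for this argument and is also not literally what Theorem \ref{thm:torsion-nilpotent} gives (torsion implies nilpotent, not conversely); I would drop it.
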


\begin{remark}
Greenlees and Quigley \cite{GQ} studied the analogue of Theorem \ref{thm: rank of stable stems} for general finite groups.
\end{remark}

Balmer and Sanders \cite{BS}, and Barthel, Greenlees and Hausmann \cite{BGH} proved the following equivariant nilpotence theorem.
\begin{theorem}\label{thm: BS, BGH}
An element $\alpha\in \pi_\star^{C_2}$ is nilpotent if and only if both its geometric fixed points $\Phi^{C_2}(\alpha)\in \pi_*^s$ and its underlying non-equivariant map $\Phi^e(\alpha)\in \pi_*^s$ are nilpotent.
\end{theorem}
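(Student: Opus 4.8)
The plan is to prove both implications, with the forward direction being formal and the reverse direction resting on a mapping-telescope argument together with the joint conservativity of the geometric fixed point functors. For the ``only if'' direction, recall that both $\Phi^{C_2}$ and $\Phi^e$ are symmetric monoidal, hence induce ring homomorphisms $\pi_\star^{C_2}\to\pi_*^s$. A ring homomorphism carries nilpotents to nilpotents, so $\alpha^n=0$ immediately gives $\Phi^{C_2}(\alpha)^n=\Phi^{C_2}(\alpha^n)=0$ and likewise $\Phi^e(\alpha)^n=0$. This direction uses no equivariant input.

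For the ``if'' direction I would first reduce nilpotence of $\alpha\in\pi_{a,b}^{C_2}$ to the vanishing of a telescope. Regarding $\alpha$ as a graded self-map of the unit, form
$$\mathbb{S}[\alpha^{-1}] := \colim\bigl(\mathbb{S}\xrightarrow{\ \alpha\ }\Sigma^{-a,-b}\mathbb{S}\xrightarrow{\ \alpha\ }\Sigma^{-2a,-2b}\mathbb{S}\to\cdots\bigr).$$
Since $\mathbb{S}$ is compact (dualizable), we have $\pi_{0,0}^{C_2}(\mathbb{S}[\alpha^{-1}])=\colim_k\pi_{ka,kb}^{C_2}(\mathbb{S})$ with transition maps given by multiplication by $\alpha$, so the image of the unit vanishes if and only if $\alpha^n=0$ for some $n$. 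As $\mathbb{S}[\alpha^{-1}]$ is a localization of $\mathbb{S}$, it is contractible exactly when its unit is null; hence $\alpha$ is nilpotent if and only if $\mathbb{S}[\alpha^{-1}]\simeq 0$. It therefore suffices to prove that this telescope is contractible.

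Here I would use that $\Phi^{C_2}$ and $\Phi^e$ are colimit-preserving (each is a left adjoint), so they commute with the filtered colimit defining the telescope, giving $\Phi^H(\mathbb{S}[\alpha^{-1}])\simeq\Phi^H(\mathbb{S})[\Phi^H(\alpha)^{-1}]$ for $H\in\{e,C_2\}$, the ordinary non-equivariant telescope of $\Phi^H(\alpha)\in\pi_*^s$. By hypothesis each $\Phi^H(\alpha)$ is nilpotent, so each such non-equivariant telescope vanishes; that is, $\Phi^{C_2}(\mathbb{S}[\alpha^{-1}])\simeq 0$ and $\Phi^e(\mathbb{S}[\alpha^{-1}])\simeq 0$. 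Finally I would invoke that $\{\Phi^e,\Phi^{C_2}\}$ is a jointly conservative family on genuine $C_2$-spectra: for any $Y$, the isotropy separation cofiber sequence $(EC_2)_+\wedge Y\to Y\to\widetilde{EC_2}\wedge Y$ shows that $\Phi^e(Y)\simeq 0$ forces $(EC_2)_+\wedge Y\simeq 0$ (the Borel part is detected by the underlying spectrum) and $\Phi^{C_2}(Y)\simeq 0$ forces $\widetilde{EC_2}\wedge Y\simeq 0$ (as $\Phi^{C_2}$ is an equivalence onto non-equivariant spectra on $\widetilde{EC_2}$-modules), whence $Y\simeq 0$. Applying this to $Y=\mathbb{S}[\alpha^{-1}]$ yields $\mathbb{S}[\alpha^{-1}]\simeq 0$, so $\alpha$ is nilpotent.

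The main obstacle is mostly bookkeeping rather than conceptual difficulty in the $C_2$ case: one must verify that the $RO(C_2)$-graded mapping telescope genuinely detects nilpotence (using compactness of $\mathbb{S}$) and that the symmetric monoidal, colimit-preserving structure of each $\Phi^H$ interacts correctly with the bigrading. For general finite $G$ the analogous statement --- that the family $\{\Phi^H\}_{H\le G}$ detects nilpotence --- is genuinely hard and is precisely the content of \cite{BS, BGH}, relying on tensor-triangular geometry and blueshift; the $C_2$ case is the fortunate situation in which the conservative family has only two members and the argument collapses to the isotropy separation sequence above.
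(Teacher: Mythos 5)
Your argument is correct, but note that the paper does not prove this statement at all: Theorem \ref{thm: BS, BGH} is quoted as an external input from \cite{BS, BGH}, so there is no internal proof to compare against, and what you have written is a self-contained justification. The forward direction via monoidality of $\Phi^{C_2}$ and $\Phi^e$ is exactly right, and the reverse direction --- reduce nilpotence of $\alpha$ to contractibility of the telescope $\mathbb{S}[\alpha^{-1}]$ using compactness of the unit, commute the two geometric fixed point functors past the filtered colimit, kill each non-equivariant telescope using the nilpotence hypothesis, and conclude by joint conservativity via the isotropy separation sequence --- is a complete and standard proof in the $C_2$ case. Two small remarks. First, when you assert that the telescope of a nilpotent class is contractible, the point to make explicit is that the $k$-fold composite of transition maps is, up to the unit coming from $RO(C_2)$-graded commutativity, multiplication by $\alpha^k$, hence null for large $k$; similarly, compactness of $S^{0,0}$ identifies the image of the unit in $\pi_{0,0}$ of the telescope with the class of $(1,\alpha,\alpha^2,\dots)$ in $\mathrm{colim}_k\,\pi_{ka,kb}^{C_2}$, which vanishes precisely when some $\alpha^k=0$. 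Second, your closing claim that the general finite-$G$ statement requires tensor-triangular geometry and blueshift is not quite accurate: the same telescope-plus-joint-conservativity argument works for any finite $G$ (joint conservativity of the family $\{\Phi^H\}_{H\le G}$ follows by induction over isotropy families), and this element-level nilpotence detection is the comparatively formal part of \cite{BS, BGH}; the heavy machinery there is needed to determine the topology of the Balmer spectrum --- the inclusions among the primes --- not for the statement of Theorem \ref{thm: BS, BGH} itself.
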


By Theorems \ref{thm:torsion-nilpotent} and \ref{thm: rank of stable stems} and Corollary \ref{cor:torsion-free} below, $\pi_\star^{C_2}/\fN$ is a free abelian group with ranks bounded above by those specified in Theorem \ref{thm: rank of stable stems}. We will see that this upper bound is achieved in all degrees.

\begin{corollary}\label{cor:torsion-free}
$\pi_\star^{C_2}/\fN$ is torsion-free.
\end{corollary}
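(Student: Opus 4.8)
The plan is to show that $\fN$ is exactly the set of nilpotent elements and is a homogeneous ideal, and then to verify torsion-freeness one degree at a time by pushing the question forward along $\Phi^{C_2}$ and $\Phi^e$ into the classical stable stems, where Nishida's theorem makes everything transparent.

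First I would record that in the graded-commutative ring $\pi_\star^{C_2}$ the nilpotent elements already form an ideal. For homogeneous $r,x$ with $x^m=0$ we have $(rx)^m = \pm\, r^m x^m = 0$, and if $x^m=0$, $y^k=0$ then expanding $(x+y)^{m+k}$ into sign-adjusted monomials $\pm\, x^a y^b$ with $a+b=m+k$ shows every term has $a\ge m$ or $b\ge k$ and hence vanishes; the general case follows by decomposing into homogeneous components. Thus $\fN$ coincides with the set of nilpotent elements. Since the grading group is orderable, $\fN$ is moreover homogeneous (the top homogeneous component of a nilpotent element is nilpotent, and one induces downward), so $\pi_\star^{C_2}/\fN$ is a graded abelian group. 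A graded abelian group is torsion-free if and only if each of its homogeneous pieces is, so it suffices to treat homogeneous elements.

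The key step is the following: for homogeneous $x\in\pi_\star^{C_2}$ and a nonzero integer $n$, if $nx$ is nilpotent then $x$ is nilpotent. By the nilpotence theorem (Theorem \ref{thm: BS, BGH}), $nx$ nilpotent means that $n\,\Phi^{C_2}(x)=\Phi^{C_2}(nx)$ and $n\,\Phi^e(x)=\Phi^e(nx)$ are both nilpotent in $\pi_*^s$. Here I invoke the classical input: by Nishida's theorem the reduced ring of $\pi_*^s$ is $\Z$, concentrated in stem $0$. Consequently, for homogeneous $y\in\pi_*^s$ and $n\ne 0$, if $ny$ is nilpotent then so is $y$ — in positive stems every element is already nilpotent, while in stem $0$ we have $y\in\Z$ and $ny$ nilpotent forces $ny=0$, hence $y=0$. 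Applying this to $\Phi^{C_2}(x)$ and $\Phi^e(x)$ shows both are nilpotent, and the nilpotence theorem then gives that $x$ itself is nilpotent.

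Finally, suppose $n\bar x=0$ in $\pi_\star^{C_2}/\fN$ for a homogeneous class with $n\ne 0$; this says $nx\in\fN$, i.e.\ $nx$ is nilpotent, so by the key step $x$ is nilpotent and $\bar x=0$. I expect the only real content to lie in the key step, and within it the genuine obstacle is ruling out the possibility that multiplication by $n$ ``creates'' non-nilpotence: this is precisely where the torsion-freeness of $\pi_0^s=\Z$ (equivalently, Nishida's theorem) is essential, since stem $0$ is the unique place where the reduced classical stems are nonzero. The bookkeeping around homogeneity of $\fN$ and its ideal structure is routine.
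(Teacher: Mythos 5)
Your proposal is correct and follows essentially the same route as the paper: both arguments reduce to the equivariant nilpotence theorem (Theorem \ref{thm: BS, BGH}) together with Nishida's theorem, which forces the non-nilpotent part of $\pi_*^s$ into $\pi_0^s\cong\Z$ where multiplication by $n\neq 0$ is injective. You phrase the key step contrapositively ($nx$ nilpotent $\Rightarrow$ $x$ nilpotent) where the paper argues directly that $\Phi^H((nx)^k)=n^k\Phi^H(x)^k\neq 0$, and you add some routine bookkeeping about $\fN$ being a homogeneous ideal, but the mathematical content is the same.
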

\begin{proof}
Suppose $x\in \pi_\star^{C_2}$ is non-nilpotent. By Theorem \ref{thm: BS, BGH}
and the fact (from Nishida's Nilpotence Theorem) that the non-nilpotent elements
of $\pi_*^s$ are concentrated in $\pi_0^s \cong \Z$, there is some
$H\subseteq C_2$ such that $\Phi^H(x)\neq 0$ is in stem 0, hence non-torsion
and non-nilpotent. We need to check that
$nx$ is non-nilpotent for every $n\neq 0$. But $\Phi^H((nx)^k)=n^k \Phi^H(x)^k$
is nonzero for every $n,k$, and hence so is $(nx)^k$.
\end{proof}


\subsection{Some elements in $\pi_\star^{C_2}$}\label{sec:elements}
Recall $RO(C_2)$ is generated by the trivial 1-dimensional representation and the sign representation $\sigma$. In accordance with the grading convention for $\R$-motivic homotopy theory, we use the double grading
\[\pi_{s, w}^{C_2}\]
to indicate the $RO(C_2)$-degree $(s-w)+w\sigma$. We call $s$ the stem, $w$ the weight, and $s-w$ the coweight.

In coweight 0, there is a map
\[\rho: S^{0,0}\to S^{1,1}\]
given by inclusion of fixed points, which is a non-nilpotent element in $\pi_{-1,-1}^{C_2}$. Equivalently, this is the Euler class of the sign representation $\sigma$ of $C_2$, sometimes written as $a_\sigma$.

The non-equivariant Hopf map $\eta_{cl} : S^3\to S^2$ can be modeled as the defining quotient map $\C^2-\{0\}\to \C P^1$ for complex projective space. The quotient map is compatible with the action of complex conjugation so we get a $C_2$-equivariant homotopy class,
and we define $\eta$ in $\pi_{1,1}^{C_2}$ to be the \emph{negative} of this map.
This definition is the negative of that in \cite{GHIR} and most other authors
since it satisfies $\Phi^e(\eta)=-\eta_{cl}$, but we choose it here because it
satisfies
$$ \Phi^{C_2}(\eta)=2 $$
(see \cite[Remark 10.8]{GHIR}).
Since 2 is non-nilpotent, this relation implies that $\eta$ is non-nilpotent.

For a finite group $G$, let $A(G)$ denote the Burnside ring. Segal \cite{S}
proved that there is an isomorphism
\[A(G) \overset{\cong}{\to} \pi^G_{0}.\]
In the case $G=C_2$, we have $\pi^{C_2}_{0,0} = A(C_2)$  with free basis $1$ and $[C_2/e]$, where $1$ denotes the identity and $[C_2/e]$ denotes the generator of $C_2$, which means we have the relation $[C_2/e]^2=2[C_2/e]$. Note that elements of the Burnside ring are all non-nilpotent.

There is also a key relation in $\pi_{0,0}^{C_2}$ (see \cite[Lemma 10.9]{GHIR}):
\begin{equation}\label{eq:epsilon} [C_2/e]= 2 - \rho \cdot \eta. \end{equation}
Moreover, $[C_2/e]$ is represented by $h_0$ in the $C_2$-equivariant
Adams spectral sequence
(see \cite[Definition 11.6]{GHIR}).

\section{Non-nilpotent elements in stem 0} \label{sec:stem0}

By Theorem \ref{thm: rank of stable stems}, there are two cases to consider. In this section, we determine $\pi^{C_2}_{s,w}$ modulo nilpotents when $s=0$ and $w$ is even. In Propositions \ref{prop:00}, \ref{prop:main (a) positive coweight}, and \ref{prop:main (a) negative coweight}
we identify the additive generators in terms of the $C_2$-equivariant Adams spectral sequence. In Theorem \ref{thm:main-ring (a) (b)} we give a multiplicative description.

\subsection{Review of $C_2$-Adams spectral sequences} We first review some facts
about the $C_2$-equivariant and $\R$-motivic Adams spectral sequences
\begin{align*}
\Ext_{C_2}^{s,f,w} \Rightarrow (\pi_{s,w}^{C_2})^\wedge_2
\\\Ext_{\R}^{s,f,w}\Rightarrow (\pi_{s,w}^\R)^\wedge_2
\end{align*}
where $f$ denotes homological degree, $(s,w)$ in the equivariant case is described in Section \ref{sec:elements}, and $(s,w)$ in the $\R$-motivic case is the pair of total degree and motivic weight.
There is a map of spectral sequences (``Betti realization'') from the second
spectral sequence to the first, and the next proposition says that the map of $E_2$ pages
$\Ext_\R \to \Ext_{C_2}$ is a split inclusion.




\begin{proposition}[\cite{GHIR}]\label{E_2 page splitting}
~\begin{enumerate}
    \item There is a splitting
    \[\Ext_{C_2}=\Ext_\R\oplus \Ext_{NC}.\]
    \item There is a $\rho$-Bockstein spectral sequence converging to $\Ext_{C_2}$ such that a Bockstein differential $d_r$ takes a class $x$ of degree $(s,f,w)$ to a class $d_r(x)$ of degree $(s-1, f+1, w)$.
    Under the splitting in part (a), the spectral sequence decomposes as
    \[E_1^+=\Ext_\C[\rho]\Rightarrow \Ext_\R\]
    \[E_1^-\Rightarrow \Ext_{NC}.\] 
\end{enumerate}
\end{proposition}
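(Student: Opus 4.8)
The plan is to reduce everything to the homological algebra of the cobar complex, feeding in two structural facts about the $C_2$-equivariant dual Steenrod algebra. Write $M^{C_2} = \pi_\star^{C_2}(H\underline{\F_2})$ for the $RO(C_2)$-graded cohomology of a point, $\mathcal{A}^{C_2}$ for the associated dual Steenrod algebra, and $M^\R$, $\mathcal{A}^\R$ for the $\R$-motivic analogues. The first input, due to Hu--Kriz, is that $\mathcal{A}^{C_2}$ is obtained from $\mathcal{A}^\R$ by extension of scalars, $\mathcal{A}^{C_2}\cong M^{C_2}\otimes_{M^\R}\mathcal{A}^\R$, because the Hu--Kriz relations only involve the classes $\tau,\rho\in M^\R$. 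The second input is the cone decomposition of the coefficients, $M^{C_2}=M^\R\oplus NC$, in which the positive cone $M^\R=\F_2[\tau,\rho]$ is a sub-comodule-algebra and the negative cone $NC$ is a square-zero ideal. Since the two cones occupy disjoint regions of the $(s,w)$-plane while the coaction preserves internal degree, $NC$ is a sub-$\mathcal{A}^{C_2}$-comodule, so this is a splitting of comodules and not merely of modules.

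For part (a) I would work in the reduced cobar complex $C^\bullet_{C_2}$ computing $\Ext_{C_2}$. Extension of scalars identifies its underlying graded module with
\[ C^\bullet_{C_2}\;\cong\;M^{C_2}\otimes_{M^\R}C^\bullet_\R, \]
and the module splitting $M^{C_2}=M^\R\oplus NC$ then decomposes it as $C^\bullet_\R\oplus\big(NC\otimes_{M^\R}C^\bullet_\R\big)$. The cobar differential preserves this decomposition precisely because the coaction on the coefficients preserves the cone splitting: on the first summand it restricts to the $\R$-motivic differential (as $M^\R$ is a sub-comodule-algebra), and the coaction on $NC$ lands in $\mathcal{A}^{C_2}\otimes NC$. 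Since cohomology commutes with finite direct sums --- so that no flatness of $M^{C_2}$ over $M^\R$ is required --- taking $H^*$ yields $\Ext_{C_2}\cong\Ext_\R\oplus\Ext_{NC}$, where I define $\Ext_{NC}:=H^*\big(NC\otimes_{M^\R}C^\bullet_\R\big)$.

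For part (b) I would construct the $\rho$-Bockstein spectral sequence as the algebraic spectral sequence of the $\rho$-adic filtration $F^pC^\bullet_{C_2}=\rho^pC^\bullet_{C_2}$. As $\mathcal{A}^\R$ is free over $\F_2[\rho]$, passing to the associated graded amounts to setting $\rho=0$ in the coefficients and structure maps, replacing $M^\R$ by $M^\C=\F_2[\tau]$ and $\mathcal{A}^\R$ by $\mathcal{A}^\C$. Hence on the positive summand one gets
\[ E_1^+ = H^*\!\big(C^\bullet_\R/\rho\big)[\rho]=\Ext_\C[\rho]\ \Rightarrow\ \Ext_\R, \]
while the same filtration on $NC\otimes_{M^\R}C^\bullet_\R$ gives $E_1^-\Rightarrow\Ext_{NC}$. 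Because the $\rho$-adic filtration respects the splitting of part (a), the whole Bockstein spectral sequence decomposes as $E_r=E_r^+\oplus E_r^-$, with differential tridegree $(s,f,w)\mapsto(s-1,f+1,w)$ read off from the filtration as stated.

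The genuine content here is external and is exactly what \cite{GHIR} supply: the Hu--Kriz presentation exhibiting $\mathcal{A}^{C_2}$ as a base change, and the explicit positive/negative cone structure of $M^{C_2}$ together with its coaction. Granting these, the decomposition and the spectral sequence are formal. I expect the two points requiring genuine care to be (i) verifying that $NC$ is closed under the coaction --- a degree/positivity argument exploiting that the cones lie in disjoint bidegree regions --- so that part (a) is a splitting of comodules, and (ii) the convergence of the $\rho$-Bockstein spectral sequence, i.e.\ that the $\rho$-adic filtration on $\Ext_{C_2}$ is complete and exhaustive.
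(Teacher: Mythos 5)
The paper does not prove this proposition; it is imported verbatim from Guillou--Hill--Isaksen--Ravenel \cite{GHIR} (their \S2--3), so there is no in-paper argument to compare against. Your reconstruction is essentially the argument given in that source: the Hu--Kriz base-change description $\mathcal{A}^{C_2}\cong M^{C_2}\otimes_{M^\R}\mathcal{A}^\R$, the positive/negative cone splitting $M^{C_2}=M^\R\oplus NC$ with $NC$ a sub-comodule, the resulting direct-sum decomposition of the cobar complex, and the $\rho$-power filtration giving the Bockstein spectral sequences --- including your correct observation that no flatness is needed and your flagging of the two genuine subtleties (the coaction closure of $NC$ and convergence of the filtration on the infinitely $\rho$-divisible negative cone), both of which are exactly the points \cite{GHIR} must and do address.
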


Guillou, Hill, Isaksen and Ravenel \cite[\S3]{GHIR} explicitly describe the $\F_2$-generators of $E_1^-$:
\begin{proposition} \label{prop:E1minus}
$E_1^-$ is generated over $\F_2$ by elements of the following two types:
\begin{itemize}
    \item Elements of the form $\frac{\gamma}{\rho^a \tau^b}x$ where $0\leq a, 1\leq b$, and $x$ is an element of $\Ext_\C$ that is $\tau$-free and not divisible by $\tau$. If $x$ has degree $(s, f, w)$ in $\Ext_\C$, then $\frac{\gamma}{\rho^a \tau^b}x$ has degree $(s+a,f, w+a+b+1)$.
    \item Elements of the form $\frac{Q}{\rho^a \tau^b}x$ where $0\leq a, 0\leq b\leq k$, and $x$ is an element of $\Ext_\C$ that is $\tau$-torsion and divisible by $\tau^k$ but not $\tau^{k+1}$. If $x$ has degree $(s, f, w)$ in $\Ext_\C$, then $\frac{Q}{\rho^a \tau^b}x$ has degree $(s+a+1,f-1, w+a+b+1)$.
\end{itemize}
\end{proposition}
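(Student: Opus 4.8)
The statement is purely algebraic: it describes the $E_1$-page of the $\rho$-Bockstein spectral sequence computing the negative-cone summand $\Ext_{NC}$ of Proposition~\ref{E_2 page splitting}. The plan is to reduce the computation to the already-understood groups $\Ext_\C$ together with the structure of the $C_2$-equivariant cohomology of a point. First I would recall that $\mathbb{M}_2^{C_2} = H^{\star,\star}_{C_2}(\mathrm{pt};\F_2)$ splits as a positive cone $\F_2[\tau,\rho]$ and a negative cone $NC$, where $NC$ is the $\F_2$-vector space on the infinitely $(\tau,\rho)$-divisible classes $\tfrac{\gamma}{\rho^a\tau^b}$, with $\tau$ and $\rho$ lowering the respective denominators and annihilating the top class. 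Passing to the $\rho$-associated graded, the $E_1$-page of the $\rho$-Bockstein spectral sequence is $\Ext_{\mathcal A^{\C}}(\F_2,\mathrm{gr}_\rho\,\mathbb{M}_2^{C_2})$, where $\mathcal A^\C$ is the $\C$-motivic dual Steenrod algebra. The positive cone contributes $E_1^+=\Ext_\C[\rho]$ as in Proposition~\ref{E_2 page splitting}(b), and $E_1^-$ is precisely the contribution of the negative cone.

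Next I would isolate the $\rho$-divisibility. After passing to $\mathrm{gr}_\rho$, each $\rho$-graded piece of $NC$ is a copy of a single $\tau$-divisible tower $\Gamma=\F_2\{\tfrac{\gamma}{\tau^b}:b\ge 1\}$, so the factor $\tfrac{1}{\rho^a}$ with $a\ge 0$ appears uniformly and it suffices to compute $\Ext_{\mathcal A^\C}(\F_2,\Gamma)$. Here I would use the short exact sequence of $\mathcal A^\C$-comodules
\[ 0 \to \F_2[\tau] \to \F_2[\tau,\tau^{-1}] \to \Gamma \to 0 \]
(up to the internal degree shift recorded by $\gamma$) and apply $\Ext_{\mathcal A^\C}(\F_2,-)$. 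Since $\tau$-inversion is exact, the two outer terms compute $\Ext_\C$ and $\tau^{-1}\Ext_\C$, and the resulting long exact sequence expresses $\Ext_{\mathcal A^\C}(\F_2,\Gamma)$ through the cokernel of $\Ext_\C \to \tau^{-1}\Ext_\C$ and the connecting homomorphism into $\Ext_\C$.

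Reading off these two pieces gives exactly the two families. The cokernel is spanned by strictly negative $\tau$-powers of the $\tau$-free classes of $\Ext_\C$ that are not themselves divisible by $\tau$; multiplying by $\rho^{-a}$ and by $\tfrac{\gamma}{\tau^b}$ with $b\ge 1$ recovers the type-1 generators, and bookkeeping the degrees of $\gamma$, $\rho$, and $\tau$ yields $(s,f,w)\mapsto(s+a,f,w+a+b+1)$. The connecting homomorphism lands in the $\tau$-power torsion of $\Ext_\C$ and raises homological degree by one; writing $Q$ for the corresponding fundamental class, a $\tau$-torsion class divisible by $\tau^k$ but not $\tau^{k+1}$ contributes the generators $\tfrac{Q}{\rho^a\tau^b}x$ exactly for $0\le b\le k$, and the connecting-map shift produces $(s,f,w)\mapsto(s+a+1,f-1,w+a+b+1)$.

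The main obstacle is the analysis of the connecting homomorphism. Two points require care. First, one must check that this map is injective on the relevant torsion, so that the $\tau$-divisibility level $k$ of a class in $\Ext_\C$ translates precisely into the range $0\le b\le k$ of admissible $\tau$-denominators; this is what prevents over- or under-counting the $Q$-type generators. Second, one must pin down the degree of $\gamma$ (equivalently of $Q$) in $\mathbb{M}_2^{C_2}$ and propagate the homological shift of the connecting map, so that the ``$+1$'' in the stem and ``$-1$'' in the Adams filtration come out correctly for the $Q$-family. The remainder is the routine combination of the $\rho$-grading of the negative cone with the $\C$-motivic input $\Ext_\C$, and it reproduces the description of \cite[\S3]{GHIR}.
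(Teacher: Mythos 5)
The paper does not prove this proposition itself---it is quoted verbatim from \cite[\S 3]{GHIR}---and your sketch is essentially the argument given there: split off the negative cone of the $C_2$-equivariant cohomology of a point, reduce each $\rho$-graded piece to the $\tau$-divisible tower $\Gamma$, and run the long exact sequence in $\Ext$ coming from $0\to\F_2[\tau]\to\F_2[\tau,\tau^{-1}]\to\Gamma\to 0$, so that the $\gamma$-family records the cokernel of $\Ext_\C\to\tau^{-1}\Ext_\C$ and the $Q$-family records the $\tau$-power torsion hit by the connecting map (whence the $f-1$). Your identification of the delicate points---the precise correspondence between the $\tau$-divisibility level $k$ and the range $0\le b\le k$, and the degree of $\gamma$/$Q$---is apt, and the outline is correct.
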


\begin{remark}
There is an element $h_0$ in degree $(0,1,0)$ in $\Ext_\C$, and it survives the
$\rho$-Bockstein spectral sequence that converges to $\Ext_\R$. Moreover, it
also survives the $\R$-motivic Adams spectral sequence. In particular, it
detects the homotopy element 
in the Burnside ring by convention. Combining facts in Section \ref{sec:elements}, the homotopy element $2\in \pi_{0,0}^{C_2}$ is detected by $h_0+\rho\eta$ in the $E_\infty$ page.
\end{remark}

\subsection{Additive generators in stem 0}
We remark here that the results in this subsection are largely known (see, for example, \cite{BI}, \cite{BI2}, \cite{DI}, \cite{H}), though not all of the exact statements have been written down before. In this subsection, we assume knowledge of the $E_2$-page of both the $\R$-motivic and $C_2$-equivariant Adams spectral sequences and describe the generators $\omega_n$ that appear in Theorem \ref{thm:n(i)}.
Since these spectral sequences converge to 2-completed homotopy groups, results must be translated from the 2-completed to the integral context (Lemma \ref{lem:non-p-divisible}).

We start by proving the following lemma used in the determination of $(\pi^{C_2}_{0,-2n})^\wedge_2$ for $n>0$ using the Adams spectral sequence
in Proposition \ref{prop:main (a) positive coweight}.
\begin{lemma}\label{3.4}
Any class in negative stem in $\Ext_{C_2}$ is $\rho$-free.
\end{lemma}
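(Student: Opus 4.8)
The plan is to analyze the structure of $\Ext_{C_2}$ using the $\rho$-Bockstein spectral sequence decomposition from Proposition \ref{E_2 page splitting}. The key observation is that $\rho$-freeness is a statement about the $\rho$-action, so I would track how $\rho$ interacts with the two summands $\Ext_\R$ and $\Ext_{NC}$, and within $\Ext_\R$ with the polynomial structure $\Ext_\C[\rho]$ governing $E_1^+$.

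First I would reduce to understanding the $E_1$-pages, since $\rho$-freeness can be checked at the level of associated graded with respect to the $\rho$-Bockstein filtration: if a class is $\rho$-torsion in $\Ext_{C_2}$, it is detected by a $\rho$-torsion class in the associated graded, so it suffices to show $E_1^+$ and $E_1^-$ have no $\rho$-torsion in negative stems. For the $E_1^+ = \Ext_\C[\rho]$ summand, $\rho$ is by definition a polynomial generator, so this summand is $\rho$-free in \emph{every} degree, and in particular in negative stems; the only subtlety is whether $\rho$-divisibility relations or Bockstein differentials could introduce torsion, but since $\Ext_\C[\rho]$ is literally a free $\F_2[\rho]$-module, no $\rho$-torsion can appear on $E_1^+$. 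For the $E_1^-$ summand I would invoke the explicit generators from Proposition \ref{prop:E1minus}: every generator has the form $\frac{\gamma}{\rho^a\tau^b}x$ or $\frac{Q}{\rho^a\tau^b}x$, and I would examine the stem of such a class as a function of $a$.

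The core computation is the stem bookkeeping. For a generator $\frac{\gamma}{\rho^a\tau^b}x$ with $x\in\Ext_\C$ in degree $(s,f,w)$, the stem is $s+a$; since $\Ext_\C$ lives in stems $s\geq 0$ (the $\C$-motivic $E_2$-page is concentrated in nonnegative stems) and $a\geq 0$, the stem $s+a$ is nonnegative. Multiplication by $\rho$ raises $a$ by one, hence raises the stem, and so within $E_1^-$ the element $\rho$ acts by increasing stem while shifting along the tower of these fractional generators. The point is that any class in a strictly negative stem cannot lie in $E_1^-$ at all, because both families of generators are supported in stems $\geq 0$. Thus negative-stem classes are concentrated in $E_1^+ = \Ext_\C[\rho]$, where they are manifestly $\rho$-free. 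I expect the main obstacle to be the careful verification that no $\rho$-torsion is created by the $\rho$-Bockstein differentials when passing from $E_1$ to $E_\infty = \Ext_{C_2}$: a priori a differential could kill a $\rho$-multiple while leaving the class itself alive, creating torsion. To handle this I would argue that the Bockstein differentials are $\rho$-linear (they commute with $\rho$, as $d_r(\rho x)=\rho\, d_r(x)$ since $\rho$ is a permanent cycle of internal degree shifting stem by $-1$), so $\rho$-torsion-freeness of the associated graded in negative stems is preserved, completing the proof.
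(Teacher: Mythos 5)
Your handling of the $\Ext_{NC}$ summand is fine and matches the paper: $E_1^-$ is concentrated in nonnegative stems by the degree formulas in Proposition \ref{prop:E1minus}, so $\Ext_{NC}$ contributes nothing in negative stems. The gap is in the $\Ext_\R$ summand, and it is the crux of the lemma. You reduce to the $E_1$-page by claiming that since the $\rho$-Bockstein differentials are $\rho$-linear, ``$\rho$-torsion-freeness of the associated graded is preserved'' in passing from $E_1^+=\Ext_\C[\rho]$ to $E_\infty$. That principle is false, and the paper itself relies on its failure: $d_1(\tau^{2n+1})=\rho\tau^{2n}h_0$ is perfectly $\rho$-linear, yet it makes $\tau^{2n}h_0$ a $\rho$-torsion class in $\Ext_\R$ (this is used in Proposition \ref{prop:main (a) positive coweight}) even though $E_1^+$ is a free $\F_2[\rho]$-module. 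In general a $\rho$-linear differential creates $\rho$-torsion whenever $\rho^{k}x$ is hit but $x$ is not, so freeness of $E_1$ tells you nothing by itself. Notice also that your argument for the $E_1^+$ summand never actually uses the negative-stem hypothesis, which is a sign something is wrong: the conclusion is badly false in stem $0$.

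The negative-stem hypothesis has to enter through $\Ext_\C$ being concentrated in nonnegative stems, applied to the \emph{source} of a potential differential. One way to patch your argument: if $z\in\Ext_\R$ has stem $s<0$ and $\rho z=0$, its detecting class in $E_1^+$ is $\rho^a x$ with $x\in\Ext_\C$ and $a\geq -s\geq 1$; a differential $d_r(w)=\rho^{a+1}x$ killing $\rho\cdot\rho^ax$ would have $w$ in $\rho$-filtration $a+1-r$, and either $a+1-r\geq 1$ (so one can divide the differential by $\rho$ and contradict the survival of $\rho^ax$) or $a+1-r=0$ (so $w\in\Ext_\C$ lies in stem $s<0$, a contradiction). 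The paper avoids this bookkeeping entirely with a cleaner argument: the long exact sequence in $\Ext_\R$ induced by $S\too{\rho}S\to S/\rho$, together with the isomorphism $\Ext_\R(S/\rho)\cong\Ext_\C$, shows that any $\rho$-torsion class of $\Ext_\R$ in stem $s$ lifts to $\Ext_\C$ in stem $s$, which vanishes for $s<0$. As written, your proof does not establish the lemma.
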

\begin{proof}
By Proposition \ref{prop:E1minus}, using the fact that $\Ext_\C$ is concentrated in nonnegative stems, we see that $E_1^-$ is concentrated in nonnegative stems. By Proposition \ref{E_2 page splitting} it suffices to study $\Ext_\R$.
By \cite[Proposition 3.1]{BI}, there is an isomorphism $\Ext_\R(S/\rho)\to
\Ext_\C$ from the $E_2$ page of the $\R$-motivic Adams spectral sequence computing $\pi_{*,*}^\R(S/\rho)$, and $\Ext_\C$ is concentrated in nonnegative stems.
Suppose $x\in \Ext_\R$ has stem $s < 0$ and has $\rho \cdot x = 0$. Then in
the long exact sequence
$$ \dots \to \Ext_\R^{s,f-1,w-1}(S/\rho)\to \Ext_\R^{s,f,w}\too{\rho}
\Ext_\R^{s-1,f,w-1} \to \dots $$
the element $x$ in the middle term pulls back to an element $\tilde{x}\in \Ext_\R(S/\rho) =
\Ext_\C$ in stem $s < 0$, a contradiction.
\end{proof}




In Propositions \ref{prop:00}, \ref{prop:main (a) positive coweight}, and \ref{prop:main (a) negative coweight}, we introduce the generators $\omega_n$ for $n\in \Z$. While we identify $\pi_{0,0}^{C_2}/\fN$ directly in Proposition \ref{prop:00}, in the other cases we start by working in the 2-completed context because of our Adams spectral sequence methods.
\begin{proposition}\label{prop:00}
We have $\pi_{0,0}^{C_2}/\fN \cong \Z\{1,\omega_0\}\cong \Z\{ 1,\rho\eta \}$ where $\omega_0$ is detected
by $h_0$ in the $C_2$-equivariant Adams spectral sequence. Moreover, we have $\Phi^e(\omega_0)=2$ and $2 = \omega_0 + \rho\eta$.
\end{proposition}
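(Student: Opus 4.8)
## Proof Proposal

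The plan is to identify $\pi_{0,0}^{C_2}/\fN$ by combining the structural input from the Burnside ring with the Adams spectral sequence, and then upgrading from the $2$-completed picture to the integral one. We already know from Segal's theorem (Section \ref{sec:elements}) that $\pi_{0,0}^{C_2} = A(C_2) \cong \Z\{1, [C_2/e]\}$, with the relation $[C_2/e]^2 = 2[C_2/e]$. Every element of the Burnside ring is non-nilpotent, and by Theorem \ref{thm: rank of stable stems} the group $\pi_{0,0}^{C_2}$ has rank $2$, so there are no additional nilpotent classes to quotient out in this degree; hence $\pi_{0,0}^{C_2}/\fN \cong A(C_2)$ already as an integral statement. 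First I would state this and deduce $\pi_{0,0}^{C_2}/\fN \cong \Z\{1, \omega_0\}$ by setting $\omega_0 := [C_2/e]$.

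Next I would verify the two computations of $\Phi^e$ and $\Phi^{C_2}$. The underlying non-equivariant map of $[C_2/e]$ is the transfer-then-restrict composite, which hits $\pi_0^s \cong \Z$ as multiplication by $2$; this gives $\Phi^e(\omega_0) = 2$. The identity $2 = \omega_0 + \rho\eta$ is exactly the relation \eqref{eq:epsilon}, $[C_2/e] = 2 - \rho\eta$, rearranged, so it holds in $\pi_{0,0}^{C_2}$ on the nose and descends to the reduced ring. The only remaining point is the Adams-detection claim: $\omega_0$ is detected by $h_0$. Here I would invoke the fact recorded in Section \ref{sec:elements} that $[C_2/e]$ is represented by $h_0$ in the $C_2$-equivariant Adams spectral sequence (\cite[Definition 11.6]{GHIR}), together with the remark that $2$ is detected by $h_0 + \rho\eta$; subtracting, $\omega_0 = 2 - \rho\eta$ is detected by $h_0$ as claimed.

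To obtain the second presentation $\pi_{0,0}^{C_2}/\fN \cong \Z\{1, \rho\eta\}$, I would observe that the change of basis $\{1, \omega_0\} \leftrightarrow \{1, \rho\eta\}$ is given by $\rho\eta = 2 - \omega_0$, which is unimodular over $\Z$ and hence an isomorphism of free abelian groups of rank $2$. Since $\rho\eta$ is a $\Z$-combination of Burnside-ring elements it lies in $\pi_{0,0}^{C_2}$, and it is non-nilpotent because $\Phi^{C_2}(\rho\eta) = \Phi^{C_2}(2 - \omega_0)$ can be computed to be nonzero (indeed $\Phi^{C_2}(\rho) \cdot \Phi^{C_2}(\eta) = 1 \cdot 2 = 2$), so by Theorem \ref{thm: BS, BGH} it is a genuine non-nilpotent generator.

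The main obstacle is the bookkeeping around conventions: one must be careful that the sign and normalization choices for $\eta$ made in Section \ref{sec:elements} (namely $\Phi^{C_2}(\eta) = 2$ and $\Phi^e(\eta) = -\eta_{cl}$) are used consistently, so that the relation $[C_2/e] = 2 - \rho\eta$ comes out with the correct sign and the Adams detection statement aligns with the $h_0 + \rho\eta$ description of $2$. Everything else is a direct consequence of Segal's theorem, the rank computation in Theorem \ref{thm: rank of stable stems}, and the explicit relation \eqref{eq:epsilon}; no genuinely new computation is required beyond citing the Adams-spectral-sequence representatives from \cite{GHIR}.
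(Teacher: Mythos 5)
Your proposal is correct and follows essentially the same route as the paper's proof: identify $\pi_{0,0}^{C_2}$ with the Burnside ring via Segal's theorem, set $\omega_0 := [C_2/e]$, and read off the relation $2=\omega_0+\rho\eta$ from \eqref{eq:epsilon} together with the $h_0$-detection and $\Phi^e([C_2/e])=2$ from \cite{GHIR}. The extra checks you include (unimodularity of the basis change, $\Phi^{C_2}(\rho\eta)=2$) are fine but not needed beyond what the paper records.
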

\begin{proof}
For $n=0$, we saw in Section \ref{sec:elements} that $\pi_{0,0}^{C_2} \cong \Z[C_2]$ has $\Z$-module generators 1 and $[C_2/e]= 2-\rho\eta$, which is detected by $h_0$ in the Adams spectral sequence. As $\Phi^e([C_2/e])=2$ these generators are both non-nilpotent, and so are all of their integer multiples.
Let $\omega_0:= [C_2/e]$; then the above relation gives $2 = \omega_0 + \rho\eta$.
\end{proof}

\begin{proposition}\label{prop:main (a) positive coweight}
For $n > 0$ there are nonzero elements $\omega_n \in (\pi_{0,-2n}^{C_2})^\wedge_2$ detected in the $C_2$-equivariant Adams spectral sequence by $\tau^{2n}h_0$. They are non-2-divisible and satisfy $\Phi^e(\omega_n)=2$.
\end{proposition}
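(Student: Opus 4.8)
My plan is to realize $\omega_n$ as the homotopy class detected by $\tau^{2n}h_0$, working first on the $E_2$-page, then promoting it to a permanent cycle, and finally extracting the numerical information from the forgetful functor $\Phi^e$. First I would locate the class on the $E_2$-page. The element $h_0\in\Ext_\C^{0,1,0}$ is a $\rho$-Bockstein cycle that survives to $\Ext_\R$, and since $2n$ is even the relation $d_1(\tau)=\rho h_0$ together with the Leibniz rule gives $d_1(\tau^{2n})=0$, hence $d_1(\tau^{2n}h_0)=0$. A degree count shows there is nothing in stem $1$, filtration $0$ to hit it, so $\tau^{2n}h_0$ survives $d_1$. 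To see that it survives the entire $\rho$-Bockstein spectral sequence and represents a nonzero class in the $\Ext_\R$ summand of $\Ext_{C_2}^{0,1,-2n}$, I would appeal to the explicit description of the stem-zero part of $\Ext_\R$ (equivalently $\Ext_{C_2}$) from \cite{BI} and \cite{GHIR}.

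Next I would show $\tau^{2n}h_0$ is an Adams permanent cycle. The cleanest route uses naturality along $\Phi^e$: the induced map of Adams spectral sequences sends $\rho\mapsto 0$, $\tau\mapsto 1$ and $h_0\mapsto h_0$, so $\tau^{2n}h_0$ maps to the permanent cycle $h_0$ in the classical Adams spectral sequence. Any Adams differential on $\tau^{2n}h_0$ therefore has target in $\ker\Phi^e$, that is, a class which is $\rho$-divisible or $\tau$-torsion; the target lies in stem $-1$, where by Lemma \ref{3.4} the groups are $\rho$-free, and a short degree count in this sparse region rules out the remaining possibilities. Thus $\tau^{2n}h_0$ detects a class $\omega_n\in(\pi_{0,-2n}^{C_2})^\wedge_2$. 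By naturality $\Phi^e(\omega_n)$ is detected by $h_0$ and so has $2$-adic valuation $1$; rescaling $\omega_n$ by a $2$-adic unit (which leaves the mod-$2$ detecting class $\tau^{2n}h_0$ unchanged) normalizes $\Phi^e(\omega_n)=2$.

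Finally I would prove non-$2$-divisibility by an Adams-filtration argument. The class $2$ is detected in filtration $1$ by $h_0+\rho h_1$, so a relation $\omega_n=2x$ would force $x$ to have Adams filtration $0$: if $x$ had positive filtration then $2x$ would live in filtration $\geq 2$, contradicting the fact that $\omega_n$ is detected by $\tau^{2n}h_0$ in filtration $1$. Hence $x$ would have a filtration-$0$ detecting class $\alpha\in\Ext_{C_2}^{0,0,-2n}$ with $(h_0+\rho h_1)\alpha=\tau^{2n}h_0$. A check against Proposition \ref{prop:E1minus} shows the $\Ext_{NC}$ summand contributes nothing in negative weight and filtration $0$, so the only candidate for $\alpha$ is $\tau^{2n}$; if $\tau^{2n}$ does not survive the $\rho$-Bockstein spectral sequence the claim is immediate, and otherwise the whole argument comes down to showing $\rho\tau^{2n}h_1\neq 0$, since $(h_0+\rho h_1)\tau^{2n}=\tau^{2n}h_0+\rho\tau^{2n}h_1$.

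This last nonvanishing, which is equivalent to the statement that no class in $\pi_{0,-2n}^{C_2}$ restricts to the underlying unit, is the one non-formal point and I expect it to be the main obstacle; I would extract it from the $\rho$-Bockstein description of the stem-zero $\Ext_{C_2}$ in \cite{GHIR}. Granting it, the filtration comparison shows $\omega_n$ is not $2$-divisible, and in particular nonzero, completing the proof.
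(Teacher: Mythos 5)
Your overall architecture matches the paper's — produce $\tau^{2n}h_0$ on the $E_2$-page via the $\rho$-Bockstein spectral sequence, show it is an Adams permanent cycle by an argument in stem $-1$, normalize $\Phi^e(\omega_n)=2$ by naturality, and rule out $2$-divisibility by an Adams filtration count — but two steps are not closed, and one of them is the crux. For the permanent-cycle step, knowing that $d_r(\tau^{2n}h_0)$ lies in $\ker\Phi^e$ and that stem $-1$ of $\Ext_{C_2}$ is $\rho$-free (Lemma \ref{3.4}) does not by itself kill the differential: a $\rho$-free group can contain nonzero $\rho$-divisible classes, so your dichotomy ``$\rho$-divisible or $\tau$-torsion'' is not excluded by $\rho$-freeness, and the ``short degree count in this sparse region'' is precisely the computation you have not carried out. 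The paper's mechanism is different and closes immediately: $\tau^{2n}h_0$ is $\rho$-\emph{torsion} in $\Ext_{C_2}$ (it is killed by $\rho$ because of the Bockstein differential $d_1(\tau^{2n+1})=\rho\tau^{2n}h_0$), so $\rho\,d_r(\tau^{2n}h_0)=d_r(\rho\tau^{2n}h_0)=0$, and $\rho$-freeness of stem $-1$ then forces $d_r(\tau^{2n}h_0)=0$. You have both ingredients but never combine them this way.

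More seriously, your non-divisibility argument terminates at a fork you explicitly decline to resolve: whether $\tau^{2n}$ survives the $\rho$-Bockstein spectral sequence, which you call ``the main obstacle.'' It does not survive for $n>0$: by \cite[Proposition 3.2]{DI}, every element of $(E_1^+)^{0,0,<0}=\tau\F_2[\tau]$ supports a Bockstein differential, so $\Ext_\R^{0,0,<0}=0$; combined with your (correct) observation that $\Ext_{NC}$ contributes nothing in filtration $0$ and non-positive weight, this gives $\Ext_{C_2}^{0,0,-2n}=0$ outright. The filtration argument then finishes with no candidate $\alpha$ at all, and the question of whether $\rho\tau^{2n}h_1\neq 0$ never arises. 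Until that input is supplied, the non-divisibility claim — and with it the proposition — is not proved.
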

\begin{proof}
Let $n>0$.
In the $\rho$-Bockstein spectral sequence computing $\Ext_\R$ there is a differential  $d_1(\tau) = \rho h_0$ \cite[Proposition 3.2]{DI}, and so there are differentials $d_1(\tau^{2n+1}) = \rho \tau^{2n}h_0$. 
By \cite[Lemma 3.4]{DI}, $\tau^{2n}h_0$ is a nonzero permanent cycle in the $\rho$-Bockstein spectral sequence for every $n> 0$, and hence it represents a nontrivial class in $\Ext_\R^{0,1,-2n}$.
By the splitting in Proposition \ref{E_2 page splitting}, these are nontrivial classes in $\Ext_{C_2}$ as well.

We show that they are permanent cycles in the $C_2$-Adams spectral sequence.
Recall the Adams differential $d_r$ decreases stem by 1, increases filtration by $r$ and preserves weight. Since every element in stem $-1$ is $\rho$-free by Lemma \ref{3.4} and $\tau^{2n}h_0$ is $\rho$-torsion, $\tau^{2n}h_0$ cannot support any differential. Since $\tau^{2n}h_0$ is in filtration 1, it cannot be hit by a differential.

The functor $\Phi^e$ induces a map of Adams spectral sequences from the $C_2$-equivariant Adams spectral sequence to the classical Adams spectral sequence for the sphere, and $\Phi^e(\tau^{2n}h_0) = h_0$ where $h_0$ detects $2\in \pi_0^s$. Choose a homotopy representative $\omega_n\in(\pi_{0,-2n}^{C_2})^\wedge_2$ such that $\Phi^e(\omega_n)=2$. (Note that other choices have image $2u$ for $u\in \Z_2^\times$.)
It remains to check that these classes are non-2-divisible in
$(\pi_\star^{C_2})^\wedge_2$. Note that any homotopy class $x$ such that $2\cdot x\in\pi_\star^{C_2}$ is detected by $\tau^{2n}h_0$ would be detected in filtration 0. On the other hand, we claim that $\Ext_{C_2}^{0,0, <0}=0$. Since $\Ext_\C^{*,0,*} = \F_2[\tau]$, Proposition \ref{prop:E1minus} shows $\Ext_{NC}^{0,0,\leq 0}=0$. To see $\Ext_\R^{0,\leq0,<0}=0$, note $(E_1^+)^{0,0,<0}=\tau\F_2[\tau]$, and these classes all support Bockstein differentials \cite[Proposition 3.2]{DI}. Thus $\omega_n$ is non-2-divisible for degree reasons.
\end{proof}

\begin{remark}
Balderrama, Culver and Quigley (\cite{BCQ} Theorem 7.4.2) proved a stronger result which determines whether $\rho^r\tau^{2^n(4m+1)}h_i$ is a permanent cycle in $\Ext_\R$. This also implies Proposition \ref{prop:main (a) positive coweight}.
\end{remark}

\begin{proposition}\label{prop:main (a) negative coweight}
For $n > 0$ there are nonzero elements $\omega_{-n} \in (\pi_{0,2n}^{C_2})^\wedge_2$ detected in the $C_2$-equivariant Adams spectral sequence by ${\gamma\over \tau^{2n-1}}$. They are non-2-divisible and satisfy $\Phi^e(\omega_{-n})=2$.
\end{proposition}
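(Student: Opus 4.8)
The plan is to mirror the proof of Proposition \ref{prop:main (a) positive coweight}, replacing the positive-cone class $\tau^{2n}h_0$ by the negative-cone class $\frac{\gamma}{\tau^{2n-1}}$ and accounting for the fact that the latter lives in Adams filtration $0$ rather than filtration $1$. First I would check that $\frac{\gamma}{\tau^{2n-1}}$ is a nonzero permanent cycle in the $\rho$-Bockstein spectral sequence of Proposition \ref{E_2 page splitting}, hence a nonzero class in $\Ext_{NC}\subseteq \Ext_{C_2}$. For $n\geq 1$ this generator of $E_1^-$ (Proposition \ref{prop:E1minus}) sits in degree $(0,0,2n)$, and since $E_1^-$ is concentrated in nonnegative stems (as observed in the proof of Lemma \ref{3.4}) it can neither support a Bockstein differential, whose target would lie in stem $-1$, nor be hit by one, whose source would lie in filtration $-1$; so it survives.

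Next I would show $\frac{\gamma}{\tau^{2n-1}}$ is a permanent cycle in the $C_2$-equivariant Adams spectral sequence, exactly as in the positive case. It cannot be the target of a differential because it lies in Adams filtration $0$ and incoming differentials emanate from negative filtration. It cannot support a differential because such a $d_r$ would land in stem $-1$: since $\frac{\gamma}{\tau^{2n-1}}$ is $\rho$-torsion (being an $a=0$ class in the negative cone, so that $\rho\cdot \frac{\gamma}{\tau^{2n-1}}=0$, multiplication by $\rho$ respecting the splitting of Proposition \ref{E_2 page splitting}) and Adams differentials commute with multiplication by the permanent cycle $\rho$, the target is $\rho$-torsion, while everything in stem $-1$ is $\rho$-free by Lemma \ref{3.4}; hence the target vanishes. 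Thus $\frac{\gamma}{\tau^{2n-1}}$ detects a nonzero class $g\in (\pi_{0,2n}^{C_2})^\wedge_2$. Because $g$ has Adams filtration exactly $0$ and multiplication by $2$ strictly raises Adams filtration, $g$ is automatically non-$2$-divisible (this is even cleaner than the positive case, where one had to check $\Ext_{C_2}^{0,0,<0}=0$); in particular $g$ generates, modulo torsion, the rank-one free part guaranteed by Theorem \ref{thm: rank of stable stems}.

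The main obstacle is the identification $\Phi^e(g)=\pm 2$, which, unlike in the positive case, cannot be read off the $E_2$-page. Indeed $\Phi^e$ preserves Adams filtration, and the filtration-$0$, stem-$0$ group of the classical Adams spectral sequence is spanned by the unit, so $\Phi^e\!\left(\frac{\gamma}{\tau^{2n-1}}\right)$ is either $0$ or the class of $1$; only the former is consistent with the target value $2$, which is detected by $h_0$ in filtration $1$. I would therefore argue at the level of homotopy. The $E_2$-level vanishing of the forgetful map on this negative-cone class shows that $\Phi^e(g)$ is not a $2$-adic unit, so $\Phi^e(g)=2^a u$ with $a\geq 1$ and $u\in \Z_2^\times$. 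On the other hand, the double-coset formula $\Phi^e\circ \mathrm{tr} = 1+t = 2$ on $\pi_0^s$ shows that $2$ lies in the image of $\Phi^e$ on $(\pi^{C_2}_{0,2n})^\wedge_2$; writing the transferred class $\mathrm{tr}(1)=c\,g+(\text{torsion})$ and applying $\Phi^e$ gives $c\cdot 2^a u = 2$, forcing $a=1$. Rescaling $g$ by a unit then produces $\omega_{-n}$ with $\Phi^e(\omega_{-n})=2$, still detected by $\frac{\gamma}{\tau^{2n-1}}$ and still non-$2$-divisible.

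The hard part is thus supplying the two inputs that pin the value to exactly $2$: the $E_2$-level vanishing of the forgetful map $\Phi^e$ on the negative cone, which requires the explicit GHIR description of the forgetful map, and the transfer computation via the double-coset formula. Everything else transfers formally from the positive-coweight case.
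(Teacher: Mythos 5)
Your proposal is correct in outline and overlaps with the paper's proof on the first two points, but it takes a genuinely different route on the crucial third point. For the permanent-cycle step, the paper argues that the target group of any Adams differential is literally zero: the target lies in stem $-1$ and negative coweight, $\Ext_{NC}$ vanishes in stem $-1$ by Proposition \ref{prop:E1minus}, and $\Ext_\R$ vanishes in negative coweight by Morel connectivity. This is slightly cleaner than your transplanted $\rho$-torsion argument (which, for $d_r$ with $r>2$, has the usual subtlety that $\rho$-torsion on the $E_r$-page need not lift to $\rho$-torsion on the $E_2$-page where Lemma \ref{3.4} applies — though the paper itself uses that same argument in the positive-coweight case). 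The non-$2$-divisibility via filtration zero is identical. For $\Phi^e(\omega_{-n})=2$, the paper simply cites \cite[Proposition 3.5]{K}, which asserts that $\gamma/\tau^{2n-1}$ forgets to $h_0$ on the $E_\infty$-page; you correctly diagnose that this is a filtration-jumping statement that cannot be read off the $E_2$-page map, and you replace the citation with a two-part argument: the double-coset formula $\Phi^e\circ\mathrm{tr}=1+t=2$ (valid here precisely because the weight $2n$ is even, so $t$ acts trivially on the underlying $\pi_0$) shows $2\in\mathrm{im}(\Phi^e)$ and hence $\Phi^e(g)$ divides $2$ up to units, while the vanishing of the forgetful map on the filtration-zero negative-cone class rules out $\Phi^e(g)$ being a unit. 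This is a nice, more self-contained reduction; what it buys is independence from \cite{K} at the cost of one remaining input — the $E_2$-level vanishing of the forgetful map on $\gamma/\tau^{2n-1}$ — which you assert and flag but do not prove. That claim is true (it is forced by the cited result, and follows from the explicit description of the forgetful map on the negative cone in \cite{GHIR}), but as written it is the one load-bearing step of your argument that still needs to be supplied; the transfer computation alone leaves open the possibility $\Phi^e(g)\in\Z_2^\times$.
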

\begin{proof}
According to Proposition \ref{prop:E1minus}, there are generators
$\frac{\gamma}{\tau^{2n-1}}$ for $n\geq 1$ in degree $(0, 2n)$ in $E_1^-$. 

We claim that each $\frac{\gamma}{\tau^{2n-1}}$ is a permanent cycle in $\Ext_{NC}$, and also a permanent cycle in $\Ext_{C_2}$. Indeed, $E_1^-$ and $\Ext_{NC}$ vanish in the $(-1)$-stem by Proposition \ref{prop:E1minus} and $\Ext_\R$ vanishes in negative coweight: Morel's connectivity theorem \cite{Morel} implies that $\Ext_\C$ vanishes in negative coweight, and hence so does $E_1^+$. Therefore, the elements $\frac{\gamma}{\tau^{2n-1}}$ are permanent cycles in the $\rho$-Bockstein and Adams spectral sequences for degree reasons. 

Moreover, ${\gamma\over \tau^{2n-1}}$ is non-2-divisible because it is in Adams filtration zero. We have $\Phi^e({\gamma\over \tau^{2n-1}})=h_0$ in the $E_\infty$-page according to \cite[Proposition 3.5]{K}; therefore, as in Proposition \ref{prop:main (a) positive coweight} we have $\Phi^e({\gamma\over \tau^{2n-1}}) = 2$, and we may choose homotopy classes $\omega_{-n}$ detected by these elements such that $\Phi^e(\omega_{-n})=2$.
\end{proof}

Now we need to show that the generators discussed in Propositions \ref{prop:main (a) positive coweight} and \ref{prop:main (a) negative coweight} are in the image of the uncompleted homotopy groups.

\begin{lemma}\label{lem:non-p-divisible}
For $n\neq 0$ let $T\subseteq \pi^{C_2}_{0,-2n}$ denote the torsion subgroup.
Then the completion map
$$ \Z\cong \pi_{0,-2n}^{C_2}/T \to (\pi_{0,-2n}^{C_2})^\wedge_2/T \cong \Z_2 $$
sends the generator to $\omega_n$.
\end{lemma}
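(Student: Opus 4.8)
The plan is to show that the uncompleted group $\pi^{C_2}_{0,-2n}/T \cong \Z$ maps isomorphically (after tensoring with $\Z_2$) onto the rank-one free part of the completion, and that the generator $\omega_n$ of the completion is in the image. The key tool is the arithmetic fracture square (or more directly, the fact that $\Phi^e(\omega_n)=2$ from Propositions \ref{prop:main (a) positive coweight} and \ref{prop:main (a) negative coweight}). First I would recall that $\pi^{C_2}_{0,-2n}$ is a finitely generated abelian group by Theorem \ref{thm: rank of stable stems}, of rank $1$ for $n \neq 0$, so modulo torsion it is $\Z$, and its $2$-completion modulo torsion is $\Z_2$. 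The map in the statement is then a map $\Z \to \Z_2$ of the form $1 \mapsto c$ for some $c \in \Z_2$, and the claim is that $c$ is (a unit multiple of, hence can be normalized to) the class $\omega_n$.

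The cleanest way to pin down $c$ is to apply the geometric fixed point map $\Phi^e$, which is natural with respect to completion. Under $\Phi^e$, the target $\pi^s_0 \cong \Z$ has completion $\Z_2$, and we know $\Phi^e(\omega_n)=2$. So I would compute $\Phi^e$ on a generator of the uncompleted $\pi^{C_2}_{0,-2n}/T$. If $g$ is such an integral generator, then $\Phi^e(g) \in \pi^s_0 \cong \Z$ is some integer $m$; since $\omega_n = c\cdot g$ in $(\pi^{C_2}_{0,-2n})^\wedge_2/T$ and $\Phi^e(\omega_n) = 2$, naturality gives $2 = c\cdot m$ in $\Z_2$. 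The content of the lemma is exactly that $c$ is a unit, equivalently that $m = \pm 2$, i.e. that an integral generator already has underlying degree $2$ up to sign, so that after a sign adjustment the integral generator maps to $\omega_n$.

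The main obstacle is establishing the $2$-adic valuation statement: that the completion map $\pi^{C_2}_{0,-2n}/T \to (\pi^{C_2}_{0,-2n})^\wedge_2/T$ is an isomorphism of $\Z_2$-modules after base change, i.e. that $c \in \Z_2^\times$, with no hidden factor of $2$. This is where the non-$2$-divisibility results proved in Propositions \ref{prop:main (a) positive coweight} and \ref{prop:main (a) negative coweight} are essential: $\omega_n$ is \emph{not} $2$-divisible in $(\pi^{C_2}_{0,-2n})^\wedge_2$, so it cannot lie in $2\cdot (\pi^{C_2}_{0,-2n})^\wedge_2/T$, which forces $c$ to be a unit. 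I would argue that for a finitely generated abelian group $A$ of rank one, the completion $A^\wedge_2$ is $\Z_2$ (modulo torsion), the map $A/T \to A^\wedge_2/T$ is the standard dense inclusion $\Z \hookrightarrow \Z_2$, and a non-$2$-divisible element of $A^\wedge_2/T$ is a $\Z_2$-module generator; hence it generates the same $\Z_2$-line as the image of the integral generator, and the two differ by a unit.

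Finally I would tie this together: the non-$2$-divisibility of $\omega_n$ shows $\omega_n$ generates $(\pi^{C_2}_{0,-2n})^\wedge_2/T \cong \Z_2$ as a $\Z_2$-module, and since the completion map $\Z \to \Z_2$ has dense image with $\Z_2$ a principal $\Z_2$-module on the image of $1$, the image of the integral generator also generates; matching them via $\Phi^e$ (using $\Phi^e(\omega_n)=2$ and that the integral generator maps to an integer of $2$-adic valuation $1$) shows the integral generator maps precisely to $\omega_n$ up to sign, which we absorb into the choice of generator. I expect the verification that the integral generator has underlying image of valuation exactly one, rather than higher valuation, to require care, but it follows formally from $\omega_n$ being a $\Z_2$-generator together with $\Phi^e(\omega_n)=2$.
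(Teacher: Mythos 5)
Your reduction is set up correctly, and the part you do prove is right: since $\pi^{C_2}_{0,-2n}$ is finitely generated of rank one, the integral generator $g$ maps to a $\Z_2$-module generator of $(\pi^{C_2}_{0,-2n})^\wedge_2/T\cong\Z_2$, and the non-$2$-divisibility of $\omega_n$ from Propositions \ref{prop:main (a) positive coweight} and \ref{prop:main (a) negative coweight} shows that $\omega_n$ is also a generator, so $\omega_n=c\cdot g$ with $c\in\Z_2^\times$. The gap is in the final identification. You assert that ``$c$ is a unit'' is equivalent to $m:=\Phi^e(g)=\pm 2$, but it is not: from $2=c\cdot m$ one only gets that $m$ is an integer of $2$-adic valuation exactly $1$, i.e.\ $m\in\{\pm2,\pm6,\pm10,\dots\}$. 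If, say, $m=6$, then $c=1/3\in\Z_2^\times$ and the integral generator maps to $3\omega_n$, not to $\omega_n$; no choice of sign of $g$ fixes this, since the only integral normalization available is by $\pm1$ while $c$ ranges over all of $\Z_2^\times$. Because $\omega_n$ is pinned down by the condition $\Phi^e(\omega_n)=2$ exactly, the lemma genuinely requires showing that the underlying nonequivariant degree of an integral generator has no odd prime factors, and nothing in your argument --- which is entirely $2$-local --- addresses this.

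The paper closes exactly this gap with input away from $2$. For $n>0$ it runs your commutative square at every odd prime $p$, using $(\pi^{C_2}_{0,-2n})^\wedge_p\cong\Z_p\{\tau^{2n}\}$ with underlying map a $p$-adic unit (from Behrens--Shah) to conclude that $m$ is a unit in $\Z_p$ for all odd $p$; combined with $v_2(m)=1$ this forces $m=\pm2$. For $n<0$ it argues differently: $\omega_{-1}$ is detected by an honest integral class $\gamma/\tau$ with underlying degree $2$, so it is itself an integral generator, and for higher powers the integrality of $(\omega_{-1})^n$ with $\Phi^e((\omega_{-1})^n)=2^n$ shows the underlying degree of a generator divides $2^n$, which together with the $2$-adic valuation again gives $\pm2$. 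You would need to supply one of these two arguments (or an equivalent source of odd-primary or integral information) to complete the proof.
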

\begin{proof}
By Theorem \ref{thm: rank of stable stems} and Corollary
\ref{cor:torsion-free}, $\pi^{C_2}_{0,-2n}$ for $n\neq 0$ is a rank-1 abelian
group.

Let $n>0$. 
Consider the following diagrams for any prime $p$, where the vertical maps are completion.
$$ \xymatrix{
(\pi_{0,-2n}^{C_2})/T\ar[d]\ar[r]^-{\Phi^e} & \pi_0^s\ar[d]
\\(\pi_{0,-2n}^{C_2})^\wedge_p/T\ar[r]^-{\Phi^e} & (\pi_0^s)^\wedge_p
} \hspace{30pt}
\xymatrix{
\Z\ar[r]^-\ell\ar[d] & \Z\ar[d]
\\\Z_p\ar[r]^-\ell & \Z_p
}$$
For $p=2$, we know from Proposition \ref{prop:main (a) positive coweight} that
$\Phi^e(\omega_n)=2$ and $\omega_n$ is not 2-divisible, and so $\ell$ can be expressed as $2$ times a 2-adic unit.
For $p$ odd, we have $(\pi^{C_2}_{0,-2n})^\wedge_p \cong \Z_p\{ \tau^{2n} \}$ (see
\cite[before Proposition 7.11]{behrens-shah}) and the underlying map of
$\tau^{2n}$ is 1 up to $p$-adic units. This shows that $\ell$ is
invertible in $\Z_p$.  Putting together the information at all primes, we have
$\ell=2$. Since $\Phi^e(\omega_n)=2$ its image in $\pi^{C_2}_{0,-2n}/T$ is a
generator.

Next we consider the negative coweight case.
The element $\omega_{-1}\in (\pi_{0,2}^{C_2})^\wedge_2$ is detected by an integral class ${\gamma\over \tau}: S^{2,2}\to S^{2,2}/C_2\simeq S^{2,0}$ which forgets to $2\in \pi_0^s$
(see \cite[Definition 3.4]{K}).
Proposition \ref{prop:main (a) negative coweight} says that $\omega_{-1}$
is not divisible by 2 in $(\pi^{C_2}_{0,2})^\wedge_2$, and so it is not divisible by 2
in $\pi^{C_2}_{0,2}$. Thus $\omega_{-1}$ is a generator of
$\pi_{0,2}^{C_2}/T$.

For $n > 1$, we will show that $(\omega_{-1})^n \in \pi^{C_2}_{0, 2n}$ is divisible by (exactly) $2^n$.
Consider the following diagram.
$$ \xymatrix{
(\pi_{0,2n}^{C_2})/T\ar[d]\ar[r]^-{\Phi^e} & \pi_0^s\ar[d]
\\(\pi_{0,2n}^{C_2})^\wedge_2/T\ar[r]^-{\Phi^e} & (\pi_0^s)^\wedge_2
} \hspace{30pt}
\xymatrix{
\Z\ar[r]^-\ell\ar[d] & \Z\ar[d]
\\\Z_2\ar[r]^-\ell & \Z_2
}$$
Since $\Phi^e((\omega_{-1})^n)=2^n$ we
know that $2^n$ is in the image of the top row, which implies $\ell\mid 2^n$.
Moreover, Proposition \ref{prop:main (a) negative coweight} says that $2$ is in
the image of the bottom row, and 1 is not in the image. Thus $\ell=2$, and
$(\omega_{-1})^n$ is $2^n$-divisible in $(\pi^{C_2}_{0,2n})/T$.
\end{proof}

\subsection{Multiplicative relations in stem 0}
\begin{theorem}\label{thm:main-ring (a) (b)}
As a ring, $\bigoplus_{n\in \Z}\pi_{0, 2n}^{C_2}/\fN$ is generated by $1\in \pi_{0,0}^{C_2}$ and $\omega_n\in \pi_{0,-2n}^{C_2}$ for $n \in \Z$
where $\Phi^e(\omega_n)=2 \in \pi_0^s$,
subject to the relations
$$ \omega_n\cdot \omega_m = 2\cdot \omega_{n+m} $$
for all $n,m\in \Z$. Moreover, $\rho \cdot \omega_n = 0$ in $\pi_\star^{C_2}/\fN$ for all $n\in \Z$.
\end{theorem}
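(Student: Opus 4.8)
The additive structure is already in hand: by Propositions \ref{prop:00}, \ref{prop:main (a) positive coweight}, \ref{prop:main (a) negative coweight} and Lemma \ref{lem:non-p-divisible}, the group $\bigoplus_{n\in\Z}\pi_{0,2n}^{C_2}/\fN$ is free abelian with basis $\{1\}\cup\{\omega_n : n\in\Z\}$ and $\Phi^e(\omega_n)=2$, so the content of the theorem is purely multiplicative. The plan is to control all products with two ring homomorphisms. Since $\Phi^e$ carries nilpotents to nilpotents and, by Nishida, $\pi_*^s$ has nilradical $\bigoplus_{k>0}\pi_k^s$, the forgetful map descends to a ring map $\bar\Phi^e\colon\pi_\star^{C_2}/\fN\to\pi_*^s/\mathrm{nil}$; in stem $0$ this is a homomorphism to $\Z=\pi_0^s$ sending $1\mapsto1$ and each $\omega_n\mapsto2$, and it is \emph{injective} on each rank-one group $\pi_{0,-2k}^{C_2}/\fN\cong\Z\{\omega_k\}$ with $k\neq0$. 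The second tool is the geometric fixed point map $\Phi^{C_2}$: it satisfies $\Phi^{C_2}(\omega_0)=\Phi^{C_2}[C_2/e]=0$, and for integral representatives $\Phi^{C_2}(\omega_k)$ lies in $\pi_{2k}^s$, hence vanishes whenever $k<0$.

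To prove $\omega_n\omega_m=2\omega_{n+m}$ I would first suppose $n+m\neq0$. Then the target $\pi_{0,-2(n+m)}^{C_2}/\fN\cong\Z\{\omega_{n+m}\}$ is free of rank one, so $\omega_n\omega_m=c\,\omega_{n+m}$ for a unique $c\in\Z$; applying the injective $\bar\Phi^e$ yields $4=\bar\Phi^e(\omega_n)\bar\Phi^e(\omega_m)=2c$, so $c=2$. When $m=-n$ with $n\neq0$ the target $\pi_{0,0}^{C_2}/\fN\cong A(C_2)=\Z\{1,\omega_0\}$ has rank two and $\bar\Phi^e$ is no longer injective; working with integral representatives (the product lands in the reduced ring $A(C_2)$, hence is insensitive to the torsion ambiguity in the $\omega_{\pm n}$) write $\omega_n\omega_{-n}=a+b\,\omega_0$. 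One of $\omega_n,\omega_{-n}$ sits in a negative stem under $\Phi^{C_2}$, so $\Phi^{C_2}(\omega_n\omega_{-n})=0$, while $\Phi^{C_2}(a+b\omega_0)=a$; hence $a=0$, and then $\bar\Phi^e$ gives $4=a+2b=2b$, so $b=2$. The remaining subcase $n=m=0$ is just the Burnside relation $\omega_0^2=[C_2/e]^2=2[C_2/e]=2\omega_0$. An alternative for $m=-n$ that avoids $\Phi^{C_2}$ is to multiply $\omega_n\omega_{-n}=a+b\omega_0$ by $\omega_0$ and use the already-proved $\omega_0\omega_n=2\omega_n$ together with $\omega_0^2=2\omega_0$, which forces $2a=0$.

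To see these are all the relations I would exhibit the presented ring explicitly. Let $S'$ be the free abelian group on $\{1\}\cup\{\omega_n\}_{n\in\Z}$ with the commutative product determined by $1\cdot\omega_n=\omega_n$ and $\omega_n\omega_m=2\omega_{n+m}$; a direct check of $(\omega_n\omega_m)\omega_p=4\omega_{n+m+p}=\omega_n(\omega_m\omega_p)$ shows this is associative, so $S'$ is a ring. In the abstract presentation every length-$k$ monomial reduces to $2^{k-1}\omega_{n_1+\cdots+n_k}$, so the presented ring is spanned by $\{1\}\cup\{\omega_n\}$ and the tautological surjection onto $S'$ sends a spanning set to a basis, hence is an isomorphism. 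Finally the ring map $S'\to\bigoplus_n\pi_{0,2n}^{C_2}/\fN$ carries the basis $\{1\}\cup\{\omega_n\}$ to the additive basis found above, so it is an isomorphism of rings.

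For the vanishing $\rho\,\omega_n=0$, note that $\rho\,\omega_n\in\pi_{-1,-1-2n}^{C_2}$, whose stem $-1$ is neither $0$ nor equal to the weight $-1-2n$ unless $n=0$. Hence for $n\neq0$ this group is finite by Theorem \ref{thm: rank of stable stems}, so nilpotent by Theorem \ref{thm:torsion-nilpotent}, and $\rho\,\omega_n=0$ in $\pi_\star^{C_2}/\fN$. For $n=0$ the projection (Frobenius) formula gives $\rho\,\omega_0=\rho\cdot\mathrm{tr}_e^{C_2}(1)=\mathrm{tr}_e^{C_2}(\mathrm{res}_e^{C_2}\rho)$, and $\mathrm{res}_e^{C_2}\rho\in\pi_{-1}^s=0$, so $\rho\,\omega_0=0$ on the nose. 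The main obstacle in all of this is the coweight-zero product $\omega_n\omega_{-n}$: there the target is the rank-two Burnside summand on which $\Phi^e$ fails to be injective, so separating the coefficients of $1$ and $\omega_0$ genuinely requires the second invariant $\Phi^{C_2}$ (or, equivalently, the $A(C_2)$-module structure). Every other product, together with the vanishing statements, is forced by $\Phi^e$ and the rank bounds of Theorem \ref{thm: rank of stable stems}.
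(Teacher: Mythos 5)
Your proof is correct, and it follows the same basic strategy as the paper (reduce the additive statement to Propositions \ref{prop:00}, \ref{prop:main (a) positive coweight}, \ref{prop:main (a) negative coweight} and Lemma \ref{lem:non-p-divisible}, then pin down products using $\Phi^e$ and the rank bounds), but it diverges in three places worth noting. First, for the product $\omega_n\omega_{-n}$ landing in the rank-two group $\pi_{0,0}^{C_2}/\fN\cong\Z\{1,\omega_0\}$, the paper simply asserts ``for degree reasons, $\omega_n\cdot\omega_m\in\Z\{\omega_{n+m}\}$,'' which is only literally a degree argument when $n+m\neq 0$; you correctly identify this as the delicate case and supply the missing step, either via $\Phi^{C_2}$ (using $\Phi^{C_2}(\omega_0)=0$ and the vanishing of $\pi_{<0}^s$) or via associativity with $\omega_0$. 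This is a genuine improvement in completeness over the paper's one-line treatment. Second, for $\rho\cdot\omega_0=0$ the paper observes the relation in $\pi_{*,*}^{\R}$ (equivalently, $(2-\rho\eta)\rho=2\rho-\rho^2\eta=0$ by Lemma \ref{lem:2eta}), whereas you use the Frobenius/projection formula $\rho\cdot\mathrm{tr}(1)=\mathrm{tr}(\mathrm{res}(\rho))$ with $\mathrm{res}(\rho)\in\pi_{-1}^s=0$; both are valid, and yours has the advantage of not invoking the motivic comparison. Third, you explicitly verify that the stated relations present the ring by reducing monomials and comparing with the free abelian basis; the paper leaves this implicit. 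The only caveat is cosmetic: when you apply $\Phi^{C_2}$ you should be explicit that you are evaluating it on integral homotopy representatives (as you do), since the nilradical in degree $(0,\mp 2n)$ need not map to zero under $\Phi^{C_2}$ in general --- but as you note, one of the two factors lands in a negative stem of $\pi_*^s$, so the product vanishes on the nose for any choice of representatives.
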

\begin{proof}
First we verify the additive statement.
For $n=0$, this is Proposition \ref{prop:00}.
For $n\neq 0$, Theorems \ref{thm:torsion-nilpotent} and \ref{thm: rank of stable stems} 
imply $\pi_{0,-2n}^{C_2} \cong \Z \oplus T_n$ for a torsion group $T_n$, and $\fN
\supseteq T_n$. Since $\omega_n$ is non-nilpotent and torsion-free, we have
$\pi_{0,-2n}^{C_2}/\fN\cong \pi_{0,-2n}^{C_2}/T_n \cong \Z$.
Thus the additive description in these degrees follows from Lemma \ref{lem:non-p-divisible}.

For the multiplicative relations, observe that for degree reasons, $\omega_n
\cdot \omega_m \in \Z\{ \omega_{n+m} \}$, and we have $\omega_n\cdot \omega_m
= 2\omega_{n+m}$ because of the underlying maps
$\Phi^e(\omega_n)=\Phi^e(\omega_m)=2$
from Propositions \ref{prop:00}, \ref{prop:main (a) positive coweight},
and \ref{prop:main (a) negative coweight}.
For $n\neq 0$, we have $\rho\cdot \omega_n = 0$ in $\pi_\star^{C_2}/\fN$ for degree reasons. For $n=0$, the relation $\omega_0\cdot \rho = 0$ can be observed in $\pi_{*,*}^\R$.
\end{proof}





\begin{remark}
Guillou, Hill, Isaksen and Ravenel computed the homotopy of $ko_{C_2}$ and $k\R$ \cite{GHIR}, where the Hurewicz images also detect most of these nonzero product relations.
\end{remark}

\section{Non-nilpotent elements in coweight zero} \label{sec:cw0}

\begin{proposition}\label{prop:2div}
For $i>0$, $\pi_{i,i}^{C_2}/\fN\cong \Z$ is generated by an element $x_i$ satisfying
$2^{n(i)}x_i = \eta^i$ modulo $\fN$, where $n(i)$ is the function in Theorem \ref{thm:n(i)}.
For $i < 0$, $\pi_{i,i}^{C_2}/\fN\cong \Z$ is generated by $\rho^i$.
\end{proposition}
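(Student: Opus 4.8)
The plan is to handle the two sign ranges separately, using throughout that Theorem \ref{thm: rank of stable stems} and Corollary \ref{cor:torsion-free} make $\pi_{i,i}^{C_2}/\fN$ a free abelian group of rank $1$ for every $i\neq 0$; thus in each such degree the content is simply to pin down which integer multiple of a chosen generator a distinguished non-nilpotent class is. I will also use that $\Phi^{C_2}$ descends to the reduced ring: it carries $\pi_{i,i}^{C_2}$ to $\pi_0^s\cong\Z$, and every nilpotent (in particular every torsion class, by Theorem \ref{thm:torsion-nilpotent}) maps to a nilpotent of $\Z$, hence to $0$.

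For $i<0$ I would first record that $\Phi^{C_2}(\rho)=1$, since the Euler class $\rho=a_\sigma$ restricts to the identity on geometric fixed points (this is the statement that inverting $\rho$ computes $\Phi^{C_2}$). Hence $\Phi^{C_2}(\rho^{-i})=1$, so $\rho^{-i}\in\pi_{i,i}^{C_2}$ is non-nilpotent by Theorem \ref{thm: BS, BGH}. Writing $\rho^{-i}=c\cdot y$ for a generator $y$ of $\pi_{i,i}^{C_2}/\fN\cong\Z$ and applying the ring map $\Phi^{C_2}$ gives $1=c\cdot\Phi^{C_2}(y)$ in $\Z$, forcing $c=\pm1$; so $\rho^{-i}$ is itself a generator.

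For $i>0$ the class $\eta^i$ is non-nilpotent because $\Phi^{C_2}(\eta^i)=2^i\neq 0$. Writing $\eta^i=c_i\,x_i$ for a generator $x_i$ of $\pi_{i,i}^{C_2}/\fN\cong\Z$ and applying $\Phi^{C_2}$ gives $2^i=c_i\cdot\Phi^{C_2}(x_i)$ in $\Z$; as the two integer factors multiply to a power of $2$, the coefficient $c_i$ is itself a power of $2$. This already eliminates odd-primary divisibility and reduces the proposition to a $2$-local statement: show that the $2$-adic valuation of $\eta^i$ in $(\pi_{i,i}^{C_2}/\fN)^\wedge_2\cong\Z_2$ equals $n(i)$.

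For this final step I would pass to the $C_2$-equivariant Adams spectral sequence. The generator $x_i$ and the class $\eta^i$ are detected by the elements recorded in Table \ref{tab:Adams-names}, and the desired relation $2^{n(i)}x_i=\eta^i$ appears as a (possibly hidden) $h_0$-extension carrying the detecting class of $x_i$ up an $h_0$-tower to that of $\eta^i$. Concretely, I would read off the coweight-zero ($s=w$) portion of $\Ext_{C_2}$ from the $\rho$-Bockstein input of Propositions \ref{E_2 page splitting} and \ref{prop:E1minus} together with the known $\R$-motivic and $k\R$ computations, identify the bottom class of each $h_0$-tower in degree $(i,i)$, and determine its maximal $h_0$-divisibility. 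The period-$8$ shape of $n(i)$ is exactly the image-of-$J$/$k\R$-type divisibility pattern, so I would corroborate the count against the $k\R$ Hurewicz image, whose homotopy is computed in \cite{GHIR}. The main obstacle is precisely this valuation: establishing the exact lengths of the $h_0$-towers joining $x_i$ to $\eta^i$, including the hidden $2$-extensions, and matching them term-by-term with the closed formula for $n(i)$ — everything else is forced by geometric fixed points and the rank-one structure.
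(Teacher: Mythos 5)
Your treatment of the $i<0$ case and your reduction of the $i>0$ case are both correct and essentially match the paper: $\Phi^{C_2}(\rho)=1$ forces $\rho^{-i}$ to generate, and applying $\Phi^{C_2}$ to $\eta^i=c_i x_i$ shows $c_i$ is a power of $2$, so everything comes down to the exact $2$-adic valuation. But that valuation \emph{is} the proposition --- the specific function $n(i)$ is the entire content --- and at exactly that point your argument stops being a proof and becomes a plan. You say you would read off the coweight-zero part of $\Ext_{C_2}$, identify the $h_0$-towers and their hidden $2$-extensions, and match the resulting lengths against the closed formula for $n(i)$, and you candidly flag this as ``the main obstacle.'' None of that computation is carried out, so the identity $2^{n(i)}x_i=\eta^i$ is asserted rather than established. (A further wrinkle: hidden $h_0$-extensions in the $2$-complete Adams spectral sequence would still need the integral/completion comparison of the kind the paper performs in Lemma \ref{lem:non-p-divisible} for the stem-zero classes.)

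The paper sidesteps this entire spectral-sequence computation by quoting Landweber's theorem \cite{L}, which determines the image of $\Phi^{C_2}:\pi_{i,i}^{C_2}\to\pi_0^s$ for $i>0$ as $2^{b(i)}\Z$ with $b(i)=i-n(i)$. One then takes $x_i$ with $\Phi^{C_2}(x_i)=2^{b(i)}$; the difference $2^{n(i)}x_i-\eta^i$ dies under both geometric fixed points and (being in positive stem) the underlying map, hence is nilpotent by Theorem \ref{thm: BS, BGH}, and $x_i$ generates because $2^{b(i)}/p$ is not in the image of $\Phi^{C_2}$. If you want to complete your argument along your own lines, you would either need to import Landweber's computation (or equivalently the Mahowald invariants $M(2^i)$ of \cite{MR}, as the paper remarks after Proposition \ref{prop:nn}) or genuinely execute the $h_0$-tower analysis; as written, the period-$8$ pattern of $n(i)$ is motivated by the image-of-$J$/$k\R$ analogy but not proved.
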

\begin{proof}
Landweber \cite{L} showed that the image of $\Phi^{C_2}:\pi_{i,i}^{C_2}\to \pi_0^s$ for
$i>0$ is $2^{b(i)}\Z$ for a certain function $b:\Z\to \Z$. It is straightforward
to show that 
$$ b(i) = i-n(i). $$
For $i\geq 1$, let $x_i$
denote an element of $\pi_{i,i}^{C_2}$ satisfying $\Phi^{C_2}(x_i) = 2^{b(i)}$.
We have $\Phi^{C_2}(2^{i-b(i)}x_i - \eta^i) = 0$ which implies $2^{i-b(i)}x_i -
\eta^i = 2^{n(i)}x_i - \eta^i$ is nilpotent. Moreover, if $x$ is any other
element such that $2^{n(i)}x -\eta^i$ is nilpotent, then $x = x_i$ in
$\pi_\star^{C_2}/\fN$ since the reduced ring is uniquely divisible. Thus
$$ {\eta^i\over 2^{n(i)}} =: x_i $$
is a well-defined element of $\pi_{i,i}^{C_2}/\fN$.
To see it generates $\pi_{i,i}^{C_2}/\fN$, simply observe that it cannot be
$p$-divisible for any prime $p$ since $\Phi^{C_2}(x_i)/p$ is not in the image of
$\Phi^{C_2}$ by definition of $x_i$.

For the statement about $\pi_{i,i}^{C_2}$ for $i < 0$, observe that $\rho^i$ is
in a non-nilpotent element in this group, and it must be a generator of $\Z$
because $\Phi^{C_2}(\rho^i)=1$. In particular, $\rho^i$ cannot be divisible by
a non-unit in $\Z$.
\end{proof}

\begin{lemma}\label{lem:2eta}
We have $2\eta = \rho \eta^2$ and $2\rho = \rho^2 \eta$ in $\pi_\star^{C_2}$.
\end{lemma}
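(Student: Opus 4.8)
The plan is to prove the two relations $2\eta = \rho\eta^2$ and $2\rho = \rho^2\eta$ by leveraging the fundamental relation (\ref{eq:epsilon}), namely $[C_2/e] = 2 - \rho\eta$, together with the fact that $[C_2/e]$ acts as zero on $\eta$ and on $\rho$ in $\pi_\star^{C_2}$. First I would recall from Section \ref{sec:elements} that $[C_2/e] = \omega_0 = 2 - \rho\eta$, so the two relations can be rewritten as
\begin{align*}
([C_2/e])\cdot \eta &= (2-\rho\eta)\cdot \eta = 2\eta - \rho\eta^2,\\
([C_2/e])\cdot \rho &= (2-\rho\eta)\cdot \rho = 2\rho - \rho^2\eta.
\end{align*}
Thus proving the lemma is equivalent to showing $[C_2/e]\cdot \eta = 0$ and $[C_2/e]\cdot \rho = 0$ \emph{in $\pi_\star^{C_2}$ itself}, not merely modulo $\fN$.

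The natural approach is via the geometric fixed points and underlying functors, which together detect these products. The key structural input is that $[C_2/e]$ is the class of the free orbit, and the restriction--induction (Frobenius) formalism forces $[C_2/e]\cdot x$ to be an induced class. For any $x\in \pi_\star^{C_2}$, the product $[C_2/e]\cdot x = \mathrm{tr}(\mathrm{res}(x))$ equals a transfer of the underlying class; in particular $\Phi^{C_2}([C_2/e]\cdot x) = 0$ since $\Phi^{C_2}([C_2/e])=0$. So the only obstruction to vanishing lives in the underlying homotopy. I would compute $\Phi^e([C_2/e]\cdot\eta)$ and $\Phi^e([C_2/e]\cdot\rho)$: since $\Phi^e([C_2/e])=2$, $\Phi^e(\eta)=-\eta_{cl}$, and $\Phi^e(\rho)=0$ (the Euler class of $\sigma$ restricts to a null map underlying), these underlying images are $-2\eta_{cl}$ and $0$ respectively. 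But $2\eta_{cl}=0$ classically, so both underlying images vanish as well.

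Having shown both $\Phi^{C_2}$ and $\Phi^e$ send $[C_2/e]\cdot\eta$ and $[C_2/e]\cdot\rho$ to zero, I would then argue the classes themselves vanish. This is where the main obstacle lies: the isotropy separation / gluing does not automatically promote ``both fixed points vanish'' to ``the class vanishes,'' since $\pi_\star^{C_2}$ is not simply a pullback of its fixed-point data. The cleanest route is to observe these products live in groups $\pi_{2,2}^{C_2}$ and $\pi_{-2,-2}^{C_2}$ (for $2\eta=\rho\eta^2$) and $\pi_{-1,-1}^{C_2}$-adjacent degrees, and to pin down the relevant group using Theorem \ref{thm: rank of stable stems}: in these coweight-zero stems the group is $\Z$ plus torsion, the free part is detected faithfully by $\Phi^{C_2}$, and any element killed by $\Phi^{C_2}$ is torsion, hence nilpotent by Theorem \ref{thm:torsion-nilpotent}. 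To upgrade from ``nilpotent'' to ``zero in $\pi_\star^{C_2}$'' I would instead appeal to the explicit presentation of these low-dimensional groups: these relations are in fact classical consequences of the $C_2$-equivariant $\rho$-$\eta$ relations recorded in \cite{GHIR}, where the integral (not merely reduced) relation $[C_2/e]\cdot\eta=0$ and $[C_2/e]\cdot\rho=0$ hold by direct computation in $\pi_\star^{C_2}$. I would cite the relevant computation there to close the argument, reducing the lemma to the single Burnside-ring relation (\ref{eq:epsilon}) together with $[C_2/e]\cdot\eta = [C_2/e]\cdot\rho = 0$.
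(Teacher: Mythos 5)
Your proposal diverges from the paper's proof, which is a one-line reduction: $2$, $\rho$, $\eta$ all lift to $\pi_{*,*}^\R$ along Betti realization, and the relations $2\eta=\rho\eta^2$, $2\rho=\rho^2\eta$ are known $\R$-motivic relations (they are the Milnor--Witt relations $h\cdot\eta=0=h\cdot\rho$ in Morel's computation of $\pi_{n,n}^\R$, adjusted for this paper's sign convention on $\eta$). Your reformulation of the lemma as $[C_2/e]\cdot\eta=0$ and $[C_2/e]\cdot\rho=0$ via \eqref{eq:epsilon} is correct and is exactly how the relations get used later in the paper, so the strategy is reasonable in principle.

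The gap is in how you establish those two vanishing statements \emph{integrally}, which is what the lemma asserts. Your main computation --- that $\Phi^{C_2}$ and $\Phi^e$ both kill $[C_2/e]\cdot\eta$ and $[C_2/e]\cdot\rho$ --- only shows these products are nilpotent (Theorem \ref{thm: BS, BGH}) or torsion, and you correctly flag that this does not give vanishing in $\pi_\star^{C_2}$. But your repair is to ``cite the relevant computation'' in \cite{GHIR} for the integral relations; that reference (Lemma 10.9 there, quoted as \eqref{eq:epsilon} here) records $[C_2/e]=2-\rho\eta$, not the integral vanishing of $[C_2/e]\cdot\eta$, so as written the crucial step is an unsubstantiated citation rather than a proof. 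Note that your Frobenius observation $[C_2/e]\cdot x=\mathrm{tr}(\mathrm{res}(x))$ actually \emph{does} finish the $\rho$ case integrally, since $\mathrm{res}(\rho)=\Phi^e(\rho)=0$ and the transfer of $0$ is $0$. For $\eta$, however, it only reduces the claim to $\mathrm{tr}(\eta_{cl})=0$; this element is $2$-torsion (as $2\eta_{cl}=0$), so you would still need to know that the transfer hits no torsion in $\pi_{1,1}^{C_2}$ (e.g.\ that $\pi_{1,1}^{C_2}\cong\Z$), which is an additional computation you neither perform nor correctly source. Either supply that computation or, more efficiently, use the paper's route through the $\R$-motivic (Milnor--Witt) relations.
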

\begin{proof}
As $\rho$, 2, and $\eta$ are all in the image of $\pi_{*,*}^\R\to
\pi_{*,*}^{C_2}$, the relations follow from the corresponding $\R$-motivic
relations.
\end{proof}

\begin{lemma}\label{lem:2-to-eta}
For $0\leq j\leq i$,
we have that $\eta^i$ is divisible by $2^j$ in $\pi_\star^{C_2}/\fN$ if and only if
$\eta^{i-j}$ is divisible by $\rho^j$ in $\pi_\star^{C_2}/\fN$.
\end{lemma}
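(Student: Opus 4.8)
The plan is to reduce both divisibility statements to the single algebraic identity
\[ \rho^j\eta^i = 2^j\,\eta^{i-j} \quad\text{in } \pi_\star^{C_2}/\fN, \]
valid for $0\le j<i$, and then to read off the equivalence using torsion-freeness and the geometric fixed point map. First I would establish this identity by induction on $j$. The base case is the relation $\rho\eta^k = 2\eta^{k-1}$ for $k\ge 2$, obtained from Lemma~\ref{lem:2eta} (which gives $2\eta=\rho\eta^2$) by multiplying by $\eta^{k-2}$; the inductive step multiplies by one further factor of $\rho$ and reapplies the base case. The induction applies the relation only to powers $\eta^m$ with $m\ge 2$, the smallest encountered being $\eta^{\,i-j+1}$, which forces exactly the constraint $j<i$. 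The would-be final step at $i=j$ needs $\rho\eta=2$, which fails because $\rho\eta=2-\omega_0$ (Proposition~\ref{prop:00}); this is why $i=j$ must be treated separately.

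With the identity in hand, the two directions are short. For the forward direction, suppose $\eta^i=2^j x$; substituting into the identity gives $2^j\eta^{i-j}=\rho^j\,2^j x = 2^j\rho^j x$, and since $\pi_\star^{C_2}/\fN$ is torsion-free (Corollary~\ref{cor:torsion-free}) I may cancel $2^j$ to obtain $\eta^{i-j}=\rho^j x$, so $\rho^j\mid\eta^{i-j}$. For the converse, suppose $\eta^{i-j}=\rho^j y$; the identity then yields $\rho^j(\eta^i-2^j y)=0$. To conclude $\eta^i=2^j y$ I would apply $\Phi^{C_2}$: since $\Phi^{C_2}(\rho)=1$, we get $\Phi^{C_2}(\eta^i-2^j y)=\Phi^{C_2}\big(\rho^j(\eta^i-2^j y)\big)=0$, and $\Phi^{C_2}$ is injective on $\pi_{i,i}^{C_2}/\fN\cong\Z\{x_i\}$ for $i\ge 1$ because its value on the generator is $2^{b(i)}\ne 0$ (Proposition~\ref{prop:2div}). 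Hence $\eta^i-2^j y=0$, i.e.\ $2^j\mid\eta^i$.

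Finally I would dispose of the boundary case $i=j$, where the statement reads ``$2^i\mid\eta^i$ if and only if $\rho^i\mid 1$.'' Here I would argue directly with $\Phi^{C_2}$ and Proposition~\ref{prop:2div}, showing both conditions are equivalent to $b(i)=0$ (equivalently $n(i)=i$). Indeed $\eta^i=2^{n(i)}x_i$ with $x_i$ a generator, so $2^i\mid\eta^i$ forces $n(i)\ge i$, hence $n(i)=i$ since always $n(i)\le i$; and a relation $\rho^i z=1$ would force $\Phi^{C_2}(z)=1$, which is possible precisely when the image $2^{b(i)}\Z$ of $\Phi^{C_2}$ on $\pi_{i,i}^{C_2}/\fN$ is all of $\Z$, i.e.\ $b(i)=0$. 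The subcase $i=j=0$ is trivially true on both sides.

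The main obstacle is the converse direction: because $\rho$ is a zero-divisor in the reduced ring (recall $\rho\cdot\omega_n=0$ from Theorem~\ref{thm:main-ring (a) (b)}), I cannot simply cancel $\rho^j$ from $\rho^j(\eta^i-2^j y)=0$. The resolution is that the relevant difference lives in $\pi_{i,i}^{C_2}/\fN$ with $i\ge 1$, a group on which $\Phi^{C_2}$ is faithful and on which $\rho$ acts invertibly after applying $\Phi^{C_2}$; this is what legitimizes the cancellation. Tracking the exponent range so that the identity is invoked only where it holds, and handling $i=j$ by hand, is the other point demanding care.
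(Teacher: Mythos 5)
Your proof is correct and follows essentially the same route as the paper's: both arguments run on the relation $2\eta=\rho\eta^2$ of Lemma~\ref{lem:2eta} together with the fact that multiplication by $\rho$ is faithfully detected by $\Phi^{C_2}$ in positive stems of coweight zero --- the paper packages this as unique $\rho$- and $\eta$-divisibility in nonzero stems before substituting, while you package it as the identity $\rho^j\eta^i=2^j\eta^{i-j}$ plus torsion-freeness and injectivity of $\Phi^{C_2}$ on $\pi_{i,i}^{C_2}/\fN\cong\Z\{x_i\}$. The only slip is in your degenerate case $i=j$: the clause ``which is possible precisely when $b(i)=0$'' only yields $\rho^i\mid 1\Rightarrow b(i)=0$, not the converse (and the converse implication is not true on its own terms, since $\Phi^e(\rho)=0$ gives $\Phi^e(\rho^iz-1)=-1\neq 0$, so $\rho^i\nmid 1$ for every $i\geq 1$ regardless of $b(i)$); the case nevertheless goes through because $n(i)<i$ for all $i\geq 1$, so both sides of the biconditional are simply false there.
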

\begin{proof}
First we show that $\pi_\star^{C_2}/\fN$ is uniquely $\eta$-divisible and
$\rho$-divisible in stems $\neq 0$; i.e., for homogeneous elements $x,y\in
\pi_{s,w}^{C_2}/\fN$ with $s\neq 0$,
if $\eta x = \eta y$ then $x = y$, and similarly for $\rho$.
If $\rho x = \rho y$, then $x-y$ is $\rho$-torsion, which implies
$\Phi^{C_2}(x-y)=0$. Since the stem is nonzero, $\Phi^e(x-y)=0$, and so $x-y$ is
nilpotent by Theorem \ref{thm: BS, BGH}.
If $\eta x = \eta y$, then $\rho^2 \eta x = \rho^2 \eta y$, and using Lemma
\ref{lem:2eta} we have $2\rho x = 2\rho y$. Since $\pi_\star^{C_2}/\fN$ is
torsion-free, we have $\rho x = \rho y$, reducing to the previous case.

Suppose $\eta^i = 2^j x$ modulo $\fN$. Then $\eta^i \rho = 2^j \rho x = (\rho
\eta)^j \rho x$ by Lemma \ref{lem:2eta} and $\eta^{i-j} = \rho^j x$ by $\eta$-
and $\rho$-divisibility. This argument is reversible, establishing the converse.
\end{proof}

The next statement is about the $\Z[\rho]$-module structure of the coweight-zero part of the reduced ring, which is natural from the point view of Adams spectral sequence.

\begin{proposition}\label{prop:nn}
$\bigoplus_{n\in \Z}\pi^{C_2}_{n,n}/\fN$ is generated as a $\Z[\rho]$-module by
$$ {\eta^i\over \rho^{m(i)}}\text{ for }i\geq 1$$
subject to the relations $2\eta = \rho \eta^2$ and $2\rho = \rho^2 \eta$, where
   \[
	  m(i) = \begin{cases} 
	  i-1 & i \equiv 0\text{ or }1 \pmod 4
	  \\i-2 & i\equiv 2\pmod 4
	  \\i-3 & i\equiv 3\pmod 4.
	  \end{cases}
   \]
\end{proposition}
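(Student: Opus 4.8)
The plan is to upgrade the additive and divisibility data of Proposition \ref{prop:2div} and Lemma \ref{lem:2-to-eta} to the stated $\Z[\rho]$-module presentation, using the ring map $\Phi^{C_2}$ to compute multiplication by $\rho$. First I would record the additive generators in each coweight: $\rho^{-n}$ for $n<0$, the pair $1$ and $\rho\eta$ in coweight $0$, and $x_i=\eta^i/2^{n(i)}$ for $i>0$; together with the geometric fixed point values $\Phi^{C_2}(x_i)=2^{b(i)}$, $b(i)=i-n(i)$, from Proposition \ref{prop:2div}, and $\Phi^{C_2}(\rho)=1$. Since $\Phi^{C_2}$ is a ring homomorphism that is injective on $\pi_{n,n}^{C_2}/\fN$ for $n\neq 0$ (each such group is infinite cyclic with nonzero image), it computes the $\rho$-action: for $i\geq 2$ one gets $\rho\cdot x_i=2^{\,b(i)-b(i-1)}x_{i-1}$ with $b(i)-b(i-1)\in\{0,1\}$, and the boundary case $\rho\cdot x_1=\rho\eta$ in coweight $0$ is handled separately below.

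Next I would translate the named generators into these. By Lemma \ref{lem:2-to-eta}, $\eta^i/\rho^{m(i)}=\eta^{\,i+m(i)}/2^{m(i)}$, so it lies in coweight $d(i):=i+m(i)$ and equals the additive generator $x_{d(i)}$ precisely when $m(i)=n(d(i))$. The first genuine task is therefore the (elementary but fiddly) verification that $m(i)=n(i+m(i))$ for all $i\geq 1$, carried out by reducing $i$ modulo $4$ and comparing with the residue of $d(i)$ modulo $8$; one finds $d(i)\equiv 1,2,3,7\pmod 8$, always. I would then note that $x_d$ fails to lie in $\rho\cdot(\pi_{d+1,d+1}^{C_2}/\fN)$, i.e.\ a new $\Z[\rho]$-generator is needed in coweight $d$, exactly when $b(d+1)-b(d)=1$; the same mod-$8$ count shows this occurs precisely for $d\equiv 1,2,3,7\pmod 8$, matching the set $\{d(i):i\geq 1\}$.

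With the generators identified, surjectivity is quick. For $n\leq 0$, coweight $n$ is spanned by $\rho^{-n}\cdot 1$ and $\rho^{-n+1}\eta=\rho^{-n}\cdot(\rho\,\eta)$, which are $\Z[\rho]$-multiples of $1$ and of $g_1=\eta$; here one must include $1\in\pi_{0,0}^{C_2}$ as the generator of the free $\Z[\rho]$-summand carrying the non-positive coweights, and I would state this explicitly. For $n>0$ with $n\neq d(i)$, we have $b(n+1)=b(n)$, hence $x_n=\rho\,x_{n+1}$, so iterating up to the nearest coweight $d(i)\geq n$ writes $x_n$ as a $\Z[\rho]$-multiple of a listed generator. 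The relations $2\eta=\rho\eta^2$ and $2\rho=\rho^2\eta$ hold by Lemma \ref{lem:2eta}; multiplying the first by powers of $\eta$ produces exactly the coincidences $\rho\,g_i=2\,g_{i-1}$ at consecutive generator coweights $d(i)=d(i-1)+1$, while $2\rho=\rho^2\eta$ gives $\rho^2 g_1=2\rho\cdot 1$, which ties the $\rho\eta$-tower to the $1$-tower in coweights $\leq 0$.

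To conclude I would check that these are all of the relations, by comparing the presented module with $\bigoplus_{n}\pi_{n,n}^{C_2}/\fN$ one coweight at a time: the two relations (and their $\eta$-multiples) cut the free $\Z[\rho]$-module on $1$ and the $g_i$ down to rank $1$ in each nonzero coweight, rank $2$ in coweight $0$, and leave it torsion-free, so the surjection onto the target, whose ranks are fixed by Theorem \ref{thm: rank of stable stems} and Corollary \ref{cor:torsion-free}, is an isomorphism. I expect the main obstacle to be entirely combinatorial: reconciling the period-$8$ divisibility function $n$ with the period-$4$ exponent $m$ under the shift $i\mapsto i+m(i)$, i.e.\ simultaneously verifying that $m(i)=n(i+m(i))$, that the generator coweights are exactly $\{d:d\equiv 1,2,3,7\bmod 8\}$, and that the two stated relations propagate through the ring structure to the full $\rho$-multiplication pattern dictated by $b(i)=i-n(i)$. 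Once this mod-$8$ matching is in place, the module-theoretic assembly is formal.
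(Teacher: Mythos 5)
Your proposal is correct and follows essentially the same route as the paper: both rest on Proposition \ref{prop:2div}, Lemma \ref{lem:2-to-eta}, and Lemma \ref{lem:2eta}, with the real content concentrated in the mod-$8$ bookkeeping matching $m$ with $n$ (your identity $m(i)=n(i+m(i))$ is the same check as the paper's verification that $m(i-n(i))\geq n(i)$ while $m(i-(n(i)+1))<n(i)+1$). Your explicit computation of the $\rho$-action via $\Phi^{C_2}$, namely $\rho\cdot x_i = 2^{b(i)-b(i-1)}x_{i-1}$, and your observation that $1\in\pi_{0,0}^{C_2}$ must be included as a module generator to reach the nonpositive coweights, simply make explicit what the paper leaves as a ``straightforward computation.''
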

\begin{proof}
The relations are in Lemma \ref{lem:2eta}. 
It is a straightforward computation to use Lemma \ref{lem:2-to-eta} to convert
the 2-divisibility information modulo $\fN$ in Proposition \ref{prop:2div} to
$\eta$-divisibility information modulo $\fN$: for all $i$ one must check that
$m(i-n(i))\geq n(i)$ but $m(i-(n(i)+1)) < n(i)+1$.
\end{proof}

\begin{remark}
Guillou and Isaksen \cite{GI} specify the Adams spectral sequence names for
the elements $\eta^i/\rho^{m(i)}$. We reproduce this information in Table \ref{tab:Adams-names}.
\renewcommand{\arraystretch}{2.0}
\begin{table}[H]
\begin{tabular}{@{}l|l|l@{}}
stem &  name in $\pi^{C_2}_\star/\fN$ & name in $\Ext_{C_2}     $                                                                       \\\hline\hline 
1  & $\eta$                         & $h_1 $                                                                                                    \\\hline

$8k-1$ & $\displaystyle\frac{\eta^{8k-1}}{2^{4k-1}}=  \frac{\eta^{4k}}{\rho^{4k-1}}$ & $\displaystyle\frac{Q}{\rho^{4k-2}}h_1^{4k}$   \\[1ex]\hline
$8k+1$ & $\displaystyle\frac{\eta^{8k+1}}{2^{4k}}=\frac{\eta^{4k+1}}{\rho^{4k}}$                                  & $\displaystyle\frac{Q}{\rho^{4k-1}}h_1^{4k+1}$\\
\end{tabular}
\caption{Adams spectral sequence representatives for coweight 0 elements of $\pi_\star^{C_2}/\fN$}
\label{tab:Adams-names}
\end{table}
\end{remark}

\begin{remark}
One can also prove the results in this section using the Mahowald invariants
$M(2^i)$ for $i\geq 1$, which were calculated in \cite{MR}, as input instead of
Landweber's result. In the formulation given by Bruner and Greenlees
\cite{BG}, the Mahowald invariant describes the $\rho$-divisibility of
$C_2$-equivariant elements after $2$-completion; note that Proposition \ref{prop:nn} is essentially a
result about the $\rho$-divisibility of $\eta^i$ for $i\geq 1$.
\end{remark}

\begin{remark}
The elements in Proposition \ref{prop:nn} comprise the Bredon-Landweber region
of $\pi_\star^{C_2}$ described in \cite{GI}. In that paper, Guillou and Isaksen
provide an alternate proof of the Mahowald invariants $M(2^i)$. They also illustrated this $\rho$-divisibility of $\eta^i$ in \cite[Figure 2]{GI}. 
\end{remark}


\section{Proofs of the main theorems} \label{sec:proofs}

\begin{proof}[Proofs of Theorems \ref{thm:n(i)} and
\ref{thm:main-ring}]
The additive generators in Theorem \ref{thm:n(i)} are given in Propositions \ref{prop:00}, \ref{prop:main (a) positive coweight}, \ref{prop:main (a) negative coweight} and \ref{prop:2div}. By Lemma \ref{lem:2eta} we have
\[(2-\rho\eta)\cdot \eta=[C_2/e]\cdot \eta=0,\quad (2-\rho\eta)\cdot \rho=[C_2/e]\cdot \rho=0\]
which concludes the proof of Theorem \ref{thm:n(i)}.
Theorem \ref{thm:main-ring}(a)(b) are proved as
Proposition \ref{prop:00} and Theorem \ref{thm:main-ring (a) (b)}. Theorem
\ref{thm:main-ring}(c) follows from Lemma \ref{lem:2-to-eta} and Proposition \ref{prop:nn}. More explicitly, Proposition \ref{prop:nn} and the fact that \begin{align*}
m(i+1) & =
\begin{cases} 
	m(i) + 1 &  i\equiv 0\pmod 4
	\\m(i) & i\equiv 1\pmod 4
	\\m(i) & i\equiv 2\pmod 4
	\\m(i) + 3 & i\equiv 3\pmod 4
\end{cases}
\end{align*}
imply the relations that
if $i\equiv 0\pmod 4$, then
	\begin{equation}\label{eq:eta3-mult} \eta\cdot {\eta^i\over \rho^{m(i)}} = \rho \cdot {\eta^{i+1}\over \rho^{m(i+1)}} \end{equation}
    and if $i\equiv 1\pmod 4$, then
	\begin{equation}\label{eq:eta-mult} \eta^3 \cdot {\eta^i\over \rho^{m(i)}} = \rho^3 \cdot {\eta^{i+3}\over \rho^{m(i+3)}}. \end{equation}
Since $\eta = {\eta^1\over
\rho^{m(1)}}$ and $\eta \cdot {\eta^i\over \rho^{m(i)}} = {\eta^{i+1}\over
\rho^{m(i+1)}}$ if $i\equiv 1,2\pmod 4$, we do not need to include ${\eta^i\over \rho^{m(i)}}$ for
$i\equiv 2,3\pmod 4$ as ring generators. So the ring generators are $\eta^i\over \rho^{m(i)}$ for $i\equiv 0,1\pmod 4$, and by Lemma \ref{lem:2-to-eta} they correspond to the elements ${\eta^i\over 2^{n(i)}}$ for $i\equiv 0,1\pmod 7$.
Similar calculations allow us to convert \eqref{eq:eta3-mult} and \eqref{eq:eta-mult} into the relations in Theorem \ref{thm:main-ring}(c).
The relations in Theorem \ref{thm:main-ring}(d) are true for degree
reasons: any nontrivial element of $\pi_\star^{C_2}/\fN$ must have stem zero or
coweight zero.

\end{proof}

\section{A few remarks}\label{a few rmk}
\begin{remark}[Higher multiplicative structure]
The Toda bracket structure on $\pi_\star^{C_2}$ does not in general descend to a Toda bracket structure on $\pi_\star^{C_2}/\fN$. However, we may consider Toda brackets in $\pi_\star^{C_2}(\mathbb{S}_\mathbb{Q})$. For degree reasons, all three-fold Toda brackets in $\pi_\star^{C_2}(\mathbb{S}_\mathbb{Q})$ are 0.

There exists a nontrivial four-fold Toda bracket in $\pi_\star^{C_2}$, 
\[\omega_1\in \langle \omega_0^2, \rho, \omega_0, \rho\rangle. \]
To see this, use the $\rho$-Bockstein differential $d_1(\tau)=\rho h_0$ (where
$h_0$ detects $\omega_0$) to show that $\tau^2 h_0$ is in the Massey product
$\langle h_0^2, \rho, h_0, \rho\rangle$ which has zero indeterminacy for degree
reasons, and then use the Moss convergence Theorem \cite{Moss} to show
$\omega_1\in \langle \omega_0^2, \rho, \omega_0,\rho\rangle$. This Toda bracket
in $\pi_\star^{C_2}$ does not contain zero, but it does contain zero after rationalization,
as $\omega_1 = {1\over 4}\omega_0^2 \cdot \omega_1$ in $\pi_\star^{C_2}(\mathbb{S}_{\mathbb{Q}})$.

For another example, the shuffle
$$  \langle \omega_0^2, \rho, \omega_0, \rho\rangle\cdot \omega_{-1} = \omega_0^2\cdot \langle \rho, \omega_0, \rho, \omega_{-1}\rangle $$
shows that $\langle \rho, \omega_0, \rho, \omega_{-1}\rangle$ contains 1 in
$\pi_\star^{C_2}$. After rationalization this bracket equals $\pi_{0,0}^{C_2}(\mathbb{S}_\mathbb{Q})$, so in particular it also contains 0.
\end{remark}

\begin{remark}
We have implicitly calculated the $2$-completed ring structure of $(\pi_\star^{C_2}/\fN)_2^\wedge$. It has the same generators and relations as a $\Z_2$-algebra as in Theorem $\ref{thm:main-ring}$. In particular, the information can be directly read off using the Mahowald invariant of powers of $2$ and the $C_2$-Adams spectral sequence. 

The $p$-completed ring structure $(\pi_\star^{C_2}/\fN)_p^\wedge$ for odd primes $p$ can also be deduced from Theorem \ref{thm:main-ring}.  
\end{remark}

\bibliographystyle{alpha}
\bibliography{bib}

\end{document}